\DeclareMathOperator{\Either}{Either}
\DeclareMathOperator{\MLR}{MLR}
\DeclareMathOperator{\KLR}{KLR}
\DeclareMathOperator{\DIM}{DIM}
\DeclareMathOperator{\aeKLR}{a.e.-KLR}
\DeclareMathOperator{\aeMLR}{a.e.-MLR}
\DeclareMathOperator{\Some}{Some}
\DeclareMathOperator{\Many}{Many}
\newcommand{\C}{\mathcal C}
\newcommand{\D}{\mathcal D}
\newcommand{\upto}{\upharpoonright}
\begin{document}
\title{Strong Medvedev reducibilities and the KL-randomness problem}

\author{Bj{\o}rn Kjos-Hanssen \inst{1}\orcidID{0000-0002-1825-0097}\thanks{This work was partially supported by a grant from the Simons Foundation (\#704836 to Bj\o rn Kjos-Hanssen). The authors would like to thank Reviewer 2 for \Cref{thm:reviewer-2}. The second author would also like to thank Carl Eadler for pointing him towards results about Karnaugh maps, which were helpful in thinking about \Cref{btt}.} \and
David J. Webb\inst{1}\orcidID{0000-0002-5031-7669}
}
\authorrunning{Kjos-Hanssen and Webb}

\tocauthor{Main Author, Author Two, Another Author}
\institute{
University of Hawai\textquoteleft i at M\=anoa, Honolulu HI 96822, U.S.A.,\\
\email{bjoern.kjos-hanssen@hawaii.edu, dwebb42@hawaii.edu},\\ WWW home page:
\texttt{http://math.hawaii.edu/wordpress/bjoern/}
}

\date{January 2022}
\maketitle
\begin{abstract}
While it is not known whether each real that is Kolmogorov-Loveland random is Martin-L\"of random, i.e., whether $\KLR\subseteq\MLR$,
Kjos-Hanssen and Webb (2021) showed that MLR is truth-table Medvedev reducible ($\le_{s,tt}$) to KLR.
They did this by studying a natural class Either(MLR) and showing that $\MLR\le_{s,tt}\Either(\MLR)\supseteq\KLR$.
We show that Degtev's stronger reducibilities (positive and linear) do not suffice for the reduction of MLR to Either(MLR), and some related results.
\keywords{{M}artin-{L}\"of randomness, {M}edvedev reducibility,  truth-table reducibility}
\end{abstract}

\section{Introduction}
	The theory of algorithmic randomness attempts to study randomness of not just random variables, but individual outcomes. The idea is to use computability theory and declare that an outcome is random if it ``looks random to any computer''. This can be made precise in several ways (with notions such as Martin-L\"of randomness and Schnorr randomness), whose interrelation is for the most part well understood \cite{MR2732288,MR2548883}. However, a remaining major open problem of algorithmic randomness asks whether each Kolmogorov--Loveland random (KL-random) real is Martin-L\"of random (ML-random).

	It is known that one can compute an ML-random real from a KL-random real \cite{MR2183813} and even uniformly so \cite{KHW}. This uniform computation succeeds in an environment of uncertainty, however: one of the two halves of the KL-random real is already ML-random and we can uniformly stitch together a ML-random without knowing which half. In this article we pursue this uncertainty and are concerned with uniform reducibility when information has been hidden in a sense.
	Namely, for any class of reals $\C\subseteq 2^\omega$, we write
	\[
		\Either(\C) = \{A\oplus B : A\in \C \text{ or }B\in \C\},
	\]
	where $A\oplus B$ is the computability-theoretic join: $$A\oplus B = \{2k\mid k\in A\}\cup \{2k+1\mid k\in B\}.$$
	For notation, we often refer to `even' bits of such a real as those coming from $A$, and `odd' bits coming from $B$.
	
	An element of $\Either(\C)$ has an element of $\C$ available within it, although in a hidden way. We are not aware of the $\Either$ operator being studied in the literature, although
	Higuchi and Kihara \cite[Lemma 4]{HIGUCHI20141201} (see also \cite{HIGUCHI20141058}) considered the somewhat more general operation $f(\C,\D)=(2^\omega\oplus\C)\cup (\D\oplus 2^\omega)$.

    A real $A$ is Martin-L\"of random iff there is a positive constant $c$  so that for any $n$,
	the Kolmogorov complexity of the first $n$ bits of $A$ is at least $n-c$, (that is, $\forall n,\ K(A_i\upto n)\geq n-c$).
	
	This is one of several equivalent definitions -- for instance, $A$ is Martin-L\"of random ($A\in\MLR$) iff no c.e.\ martingale succeeds on it. In contrast, $A$ is Kolomogorov-Loveland random ($A\in\KLR$) iff no computable nonmonotonic betting strategy succeeds on it.

	As $\MLR\subseteq\KLR$, KLR is trivially Medvedev reducible to MLR. %cite something
	In \cite{KHW}, $\Either$ is implicitly used to show the reverse, that MLR is Medvedev reducible to KLR. For a reducibility $r$, such as $r=tt$ (truth-table, \Cref{def:tt}) or $r=T$ (Turing), let $\le_{s,r}$ denote strong (Medvedev) reducibility using $r$-reductions, and $\le_{w,r}$ the corresponding weak (Muchnik) reducibility.
	\begin{theorem}\label{ref-cie}
	    $\MLR\le_{s,tt}\Either(\MLR)$.
	\end{theorem}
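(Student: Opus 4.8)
The plan is to build a single total truth-table functional $\Phi$ and show directly that $\Phi(A\oplus B)\in\MLR$ whenever $A\in\MLR$ or $B\in\MLR$; by the symmetry of the join it suffices to treat the case $A\in\MLR$, with $B$ completely arbitrary. The verification I have in mind is entirely measure-theoretic. Given a Martin-L\"of test $(V_n)_n$, I would form the uniformly $\Sigma^0_1$ classes $U_n=\{A:\exists B\,(\Phi(A\oplus B)\in V_n)\}$, i.e.\ the projection onto the even coordinates of the open set $\Phi^{-1}(V_n)$, and aim to prove $\mu(U_n)\le\sqrt{\mu(V_n)}$. If so, a reindexing of $(U_n)_n$ is again a Martin-L\"of test; any $A$ for which some $B$ makes $\Phi(A\oplus B)$ fail $(V_n)_n$ lies in $\bigcap_n U_n$ and hence is not ML-random, and the symmetric statement for $B$ yields the theorem. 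Thus everything reduces to constructing $\Phi$ so that for every open $V$ both coordinate-projections of $\Phi^{-1}(V)$ have measure at most $h(\mu(V))$ for some $h$ with $h(0^{+})=0$.

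For $\Phi$ I would use a blockwise ``de-interleaving with mixing'': partition $\omega$ into consecutive blocks and compute each output block from the corresponding blocks of $A$ and of $B$ via a fixed Boolean gadget $g\colon\{0,1\}^{k}\times\{0,1\}^{k}\to\{0,1\}^{m}$, where the gadget must meet two competing demands. First, holding either side fixed, $g(a,\cdot)$ and $g(\cdot,b)$ must push the uniform measure to the uniform measure, so that in the easy direction the randomness that is actually present survives; note that when $B$ is even ML-random relative to $A$ the whole join is ML-random by van Lambalgen's theorem, so the genuine work is the case where $B$ is correlated with, or equal to, $A$. Second, for each fixed $a$ the reachable set $\{g(a,b):b\}$ must be a small proper subset of $\{0,1\}^m$, so that an adversarial $B$ cannot drive the output into any prescribed small target for more than a small measure of $A$'s. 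Cooking up such a $g$ is Karnaugh-map bookkeeping: one wants the $2^{m}$ fibres $g^{-1}(c)$ to be contained in combinatorial rectangles of dimensions roughly $2^{k-m/2}\times 2^{k-m/2}$ tiling the grid.

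The main obstacle, and the delicate point, is that these two demands genuinely conflict: ``pushes uniform to uniform for each fixed side'' forces every fibre-map $g(a,\cdot)$ to be surjective — in particular to attain $0^{m}$ — while ``small reachable set for each fixed side'' wants it far from surjective, and one checks that no gadget acting on disjoint blocks can satisfy both, since if every $g(a,\cdot)$ hits $0^{m}$ an adversarial $B$ forces the output to be $0^{\omega}$. The resolution must therefore make the blocks (or individual bits) overlap, or let the gadget for a block depend on earlier blocks, so that an adversary trying to cancel the output in one block is driven into an inconsistent constraint elsewhere; this is exactly why the positive and linear refinements of $\le_{s,tt}$ cannot work, since there such cancellation is always available. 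After a correct $\Phi$ is fixed, the remaining labour is the combinatorial estimate that the projections of $\Phi^{-1}(V)$ are small, which I would obtain by reducing to basic clopen $V$ and then summing — taking care that the bound survives the union, since the naive per-cylinder estimate does not, and it is precisely that failure which forces the overlapping design above.
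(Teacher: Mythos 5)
You haven't actually produced a proof: the proposal sets out a verification scheme and a pair of desiderata for a functional $\Phi$, then correctly shows those desiderata are mutually inconsistent for every disjoint-block gadget, and ends by asserting that the true construction ``must'' use overlapping or history-dependent blocks without exhibiting one. Since the entire content of the theorem is the existence of such a $\Phi$, this is the missing idea, not a deferred detail. Worse, the quantitative target $\mu(U_n)\le\sqrt{\mu(V_n)}$ for all open $V_n$ is unattainable by \emph{any} total functional already at the first output bit: writing $R_i=\{(A,B):\Phi(A\oplus B)(0)=i\}$, if the $A$-projection of $R_0$ and the $B$-projection of $R_1$ both had measure less than $1$, there would be an $A_1$ forcing output bit $1$ against every $B$ and a $B_0$ forcing output bit $0$ against every $A$, which is absurd; so for $V$ one of the two half-spaces, some required projection has measure $1>\sqrt{1/2}$. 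One can retreat to demanding the bound only for sets of small measure, but then, as you yourself note, the per-cylinder estimate does not survive unions, and no replacement is offered. So both the construction and its verification remain open at the end of the proposal. (A smaller error: the appeal to van Lambalgen is backwards --- $B\in\MLR^A$ does not make $A\oplus B$ random unless $A$ is also random, and in the $\Either$ setting the other half is arbitrary, so there is no ``easy case'' of that kind.)

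The paper's proof (via Theorem 2 of the cited Kjos-Hanssen--Webb paper; the same construction appears in \Cref{ttSome} for arbitrarily many columns) avoids mixing the two halves altogether. The reduction copies bits from one designated column and switches to the other whenever, at some stage of a computable nonincreasing approximation to prefix-free complexity, the currently designated column's prefix is observed to violate the current candidate randomness deficiency; the threshold is incremented at each switch and the schedule returns to each column infinitely often. Since one column really is ML-random with some true deficiency $c$, once the threshold exceeds $c$ that column is never abandoned, so the output agrees with an ML-random real on a tail and is therefore ML-random; totality is immediate because the switching condition is decidable, so the reduction is truth-table. No measure estimate on projections is needed, and the correlation between $A$ and $B$ never enters. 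If you want to rescue your approach, the ``overlap/history-dependence'' you gesture at would in effect have to implement this limit-identification of the random half.
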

	\begin{proof}
	    \cite[Theorem 2]{KHW} shows that $\MLR\le_{s,tt}\KLR$. The proof demonstrates that $\MLR\le_{s,tt}\Either(\MLR)$ and notes, by citation to \cite{MR2183813}, that $\KLR\subseteq\Either(\MLR)$. \qed
	\end{proof}
	In fact, the proof shows that the two are truth-table Medvedev equivalent. A natural question is whether they are Medvedev equivalent under any stronger reducibility.

    Let $\DIM_{1/2}$ be the class of all reals of effective Hausdorff dimension 1/2. 
    \Cref{ref-cie} is a counterpoint to Miller's result $\MLR\not\le_{w,T}\DIM_{1,2}$ \cite{ExtractInfo}, since $\MLR\not\le_{s,tt}\DIM_{1,2}\supseteq\Either(\MLR)$.

	\begin{definition}\label{def:tt}
		Let $\{\sigma_n\mid n\in\omega\}$ be a uniformly computable list of all the finite propositional formulas in variables $v_1,v_2,\dots$.
		Let the variables in $\sigma_n$ be $v_{n_1},\dots,v_{n_d}$ where $d$ depends on $n$.
		We say that $X\models \sigma_n$ if $\sigma_n$ is true with 
		$X(n_1),\dots,X(n_d)$ substituted for $v_{n_1},\dots,v_{n_d}$.
		A reduction $\Phi^X$ is a \textbf{truth-table} reduction if there is a computable function $f$ such that for each $n$ and $X$, $n\in\Phi^X$ iff $X\models \sigma_{f(n)}$. 
	\end{definition}
	For two classes of reals $\C,\D$, we write $\C\le_{s,*}\D$ to mean that there is a $*$-reduction $\Phi$ such that $\Phi^D\in\C$ for each $D\in\D$, where $*$ is a subscript in \Cref{fig:table}. %In this notation, the aforementioned result of \cite{KHW} is that $\MLR\leq_{s,tt}\KLR$. %This is now above

	As shown in \Cref{fig:Degtev}, the next three candidates to strengthen the result (by weakening the notion of reduction under consideration) are the positive, linear, and bounded truth-table reducibilties. Unfortunately, any proof technique using $\Either$ will no longer work, as for these weaker reducibilities, $\MLR$ is not Medvedev reducible to $\Either(\MLR)$.

\section{The Failure of Weaker Reducibilities}
	When discussing the variables in a table $\sigma_{f(n)}$, we say that a variable is of a certain parity if its index is of that parity, i.e. $n_2$ is an even variable. As our reductions operate on $2^\omega$, we identify the values $X(n_i)$ with truth values as $1=\top$  and $0=\bot$.

	\begin{definition}
		A truth-table reduction $\Phi^X$ is a \textbf{positive} reduction if the only connectives in each $\sigma_f(n)$ are $\lor$ and $\land$.
	\end{definition}
	\begin{theorem}\label{positive}
		$\MLR\not\le_{s,p}\Either(\MLR)$.
	\end{theorem}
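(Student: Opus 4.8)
The plan is to exploit that a positive reduction is \emph{monotone}: identifying reals with subsets of $\omega$, if $X\subseteq X'$ then $\Phi^X\subseteq\Phi^{X'}$, since each $\sigma_{f(n)}$ is a monotone Boolean formula. Assume for contradiction that $\Phi$ is positive and $\Phi^D\in\MLR$ for all $D\in\Either(\MLR)$. The main device is to feed $\Phi$ inputs $X\oplus A$ with $A\in\MLR$ fixed --- all of these lie in $\Either(\MLR)$ regardless of $X$ --- and to push $X$ to the extreme sequences $\emptyset$ and $\mathbf 1$ (constant $0$ and constant $1$), and symmetrically in the odd coordinate.

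First I would assemble the closure facts needed: (i) an ML-random set contains no infinite computable set and is contained in no co-infinite computable set (else a computable martingale succeeds); and (ii) if $W\subseteq V$ are both ML-random then $V\setminus W$ has asymptotic density $0$ (both have density $1/2$ by the law of large numbers). Applying monotonicity to $\emptyset\oplus\emptyset\subseteq\emptyset\oplus A\subseteq\mathbf 1\oplus\mathbf 1$ for $A\in\MLR$ together with (i) shows $\Phi^{\emptyset\oplus\emptyset}$ is finite and $\Phi^{\mathbf 1\oplus\mathbf 1}$ is cofinite; hence all but finitely many $n$ are \emph{live}, i.e.\ $\sigma_{f(n)}$ is non-constant and so has a nonempty prime implicant. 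For $A\in\MLR$ put $T_A:=\Phi^{\mathbf 1\oplus A}\setminus\Phi^{\emptyset\oplus A}$; by (ii) it has density $0$, it is $\le_{tt}A$, and by monotonicity every $\Phi^{X\oplus A}$ agrees with the ML-random set $\Phi^{\emptyset\oplus A}$ off $T_A$. Let $U_B:=\Phi^{B\oplus\mathbf 1}\setminus\Phi^{B\oplus\emptyset}$ be the analogous set obtained by varying the odd coordinate.

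The argument then splits. \emph{Case 1: $T_A$ is finite for every $A\in\MLR$ and $U_B$ is finite for every $B\in\MLR$.} Then for $A,B\in\MLR$ the squeeze $\Phi^{\emptyset\oplus B}\subseteq\Phi^{A\oplus B}\subseteq\Phi^{\mathbf 1\oplus B}$ gives $\Phi^{A\oplus B}=^{*}\Phi^{\emptyset\oplus B}$, and the dual squeeze gives $\Phi^{A\oplus B}=^{*}\Phi^{A\oplus\emptyset}$, so $\Phi^{A\oplus\emptyset}=^{*}\Phi^{\emptyset\oplus B}$ for all $A,B\in\MLR$. Fixing $A$ and writing $C:=\Phi^{A\oplus\emptyset}$ (which is in $\MLR$, since $A\oplus\emptyset\in\Either(\MLR)$), we get $C=^{*}\Phi^{\emptyset\oplus B}\le_T B$ for every $B\in\MLR$; but the class of oracles computing a fixed non-computable set is null, whereas $\MLR$ has measure $1$, so $C$ is computable --- contradicting $C\in\MLR$. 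If instead some $U_B$ is infinite, precompose $\Phi$ with the swap of even and odd coordinates: this is still a positive reduction witnessing $\MLR\le_{s,p}\Either(\MLR)$, since $\Either(\MLR)$ is closed under the swap, and it turns $U_B$ into its $T_B$. So we may assume \emph{Case 2: some $T_A$ is infinite.}

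Case 2 is the heart of the matter, and I expect it to be the main obstacle. For each $n\in T_A$ the formula obtained from $\sigma_{f(n)}$ by substituting $A$ for its odd-indexed variables is a non-constant monotone function of the even-indexed variables; pick a nonempty prime implicant $P_n$ of it. Since $T_A$ is infinite, I would try to choose $X$ so that $\Phi^{X\oplus A}\restriction T_A$ follows a pattern simple enough relative to $A$ to be beaten by an $A$-computable martingale (equivalently, so that $\Phi^{X\oplus A}$ fails an ML-test), contradicting $X\oplus A\in\Either(\MLR)$. The difficulty is precisely that the prime implicants $P_n$ for different $n\in T_A$ may share even-indexed variables, so a single $X$ forcing one live bit in can incidentally force others in or out, and one must show enough independence survives on the density-zero set $T_A$ to realize a non-random pattern; this probably requires a finer analysis of the sizes and overlaps of the $P_n$, possibly combined with first replacing $A$ by $A\oplus_{\mathrm{bit}}S$ (bitwise sum mod $2$) for a suitable computable $S$ --- which preserves ML-randomness, hence membership in $\Either(\MLR)$ --- in order to decouple the relevant variables before steering $X$.
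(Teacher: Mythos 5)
Your Case 1 (both $T_A$ and $U_B$ always finite) is correct, and the measure-theoretic conclusion via the nullity of the cone below a fixed noncomputable set is sound. But Case 2 --- which you rightly identify as the heart of the matter --- is left unproved, and the difficulty you name is real: beyond the overlap of prime implicants, the more fundamental obstacle is that $T_A$ is only $A$-computable, so any ``simple pattern'' you impose on $\Phi^{X\oplus A}$ along $T_A$ is witnessed only by an $A$-relativized test and does not contradict unrelativized ML-randomness. As written the argument is therefore incomplete. The repair stays entirely inside your monotonicity framework but uses the fully computable extreme points instead of $\mathbf 1\oplus A$ versus $\emptyset\oplus A$: consider the computable sets $\Phi^{\mathbf 1\oplus\emptyset}$ and $\Phi^{\emptyset\oplus\mathbf 1}$. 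If either is infinite, say the first, then for $A\in\MLR$ monotonicity gives $\Phi^{\mathbf 1\oplus\emptyset}\subseteq\Phi^{\mathbf 1\oplus A}\in\MLR$, an infinite computable subset of an immune set --- contradiction. If both are finite, then for $R\in\MLR$ we get $\Phi^{R\oplus\emptyset}\subseteq\Phi^{\mathbf 1\oplus\emptyset}$, so $\Phi^{R\oplus\emptyset}$ is finite, hence computable and not random, although $R\oplus\emptyset\in\Either(\MLR)$ --- contradiction. This dichotomy replaces both of your cases and never needs $T_A$, $U_B$, or the density and measure arguments.

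The paper's own proof is precisely the syntactic form of this dichotomy: writing each $\sigma_{f(n)}$ in conjunctive normal form, either infinitely many $n$ have every clause containing a variable of some fixed parity, in which case setting that column to $\omega$ forces $\sigma_{f(n)}=\top$ on an infinite computable set of inputs, defeating immunity; or else for almost every $n$ some clause contains only variables of the other parity, in which case setting that column to $\emptyset$ forces $\sigma_{f(n)}=\bot$ cofinitely often, making the output computable. Your monotonicity observation is a clean semantic reformulation of the same mechanism, but the route through $T_A=\Phi^{\mathbf 1\oplus A}\setminus\Phi^{\emptyset\oplus A}$ pushes all the work into a case that, as far as I can see, cannot be closed without returning to the computable extreme points.
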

	\begin{proof}
		Let $\Phi^X$ be a positive reduction. By definition, for each input $n$, $\sigma_{f(n)}$ can be written in conjunctive normal form:
		$\sigma_{f(n)} = \bigwedge_{k=1}^{t_n} \bigvee_{i=1}^{m_k}v_{f(n),i,k}$. We say that a clause of $\sigma_{f(n)}$ is a disjunct $\bigvee_{i=1}^{m_k}v_{f(n),i,k}$. There are two cases to consider:\\
		\\
		\noindent\emph{Case 1:} There is a parity such that there are infinitely many $n$ such that every clause of $\sigma_{f(n)}$ contains a variable.

		Without loss of generality, consider the even case. Let $A = \omega\oplus R$ for $R$ an arbitrary random real.
		Each $\bigvee_{i=1}^{m_k}v_{n,i,k}$ that contains an even variable is true.
		So for the infinitely many $n$ whose disjunctions all query an even variable, $\sigma_{f(n)} = \bigwedge_{k=1}^{t_n} \top = \top$.
		As these infinitely many $n$ can be found computably, $\Phi^A$ is not immune, and so not random.\\
		%\emph{This part works even if there are negative variables.}

		\noindent\emph{Case 2:} For either parity, for almost all inputs $n$, there is a clause of $\sigma_{f(n)}$ containing only variables of that parity.

		Set $A = R\oplus \emptyset$ for an arbitrary random real $R$. For almost all inputs, some clause is a disjunction of $\bot$, so that the entire conjunction is false.
		Thus $\Phi^A$ is cofinitely often 0, and hence computable, and so not random.\qed
		%\emph{This part uses the fact that all variables are positive.}
	\end{proof}
	\begin{table}
		\centering
		\begin{tabular}{c|c|c}
			
			Reducibility	&Subscript	&Connectives\\
			\hline
			truth table		&$tt$		&any\\
			bounded $tt$	&$btt$		&any\\
			$btt(1)$		&$btt(1)$	&$\{\lnot\}$\\
			linear			&$\ell$		&$\{+\}$\\
			positive		&$p$		&$\{\land,\lor\}$\\
			conjunctive		&$c$		&$\{\land\}$\\
			disjunctive		&$d$		&$\{\lor\}$\\
			many-one		&$m$		&none\\
			
		\end{tabular}
		\caption{Correspondences between reducibilities and sets in Post's Lattice. Here $+$ is addition mod 2 (also commonly written XOR).
			Note that while a $btt$ reduction can use any connectives, there is a bound $c$ on how many variables each $\sigma_{f(n)}$ can have, hence if $c=1$ the only connective available is $\lnot$.}
		\label{fig:table}
	\end{table}
	\begin{figure}
		\centerline{
		\xymatrix{
			&&d\ar[r]&p\ar[dr]\\
			1\ar[r]&m\ar[ur]\ar[r]\ar[dr]&c\ar[ur]&\ell\ar[r]&tt\ar[r]&T\\
			&&btt(1)\ar[ur]\ar[r]&btt\ar[ur]
		}}
		\caption{\cite{OdiRed} The relationships between reducibilities in \Cref{fig:table}, which themselves are between $\leq_1$ and $\leq_T$.
			Here $x\rightarrow y$ indicates that if two reals $A$ and $B$ enjoy $A\leq_x B$, then also $A\leq_y B$. 
		}
		\label{fig:Degtev}
	\end{figure}
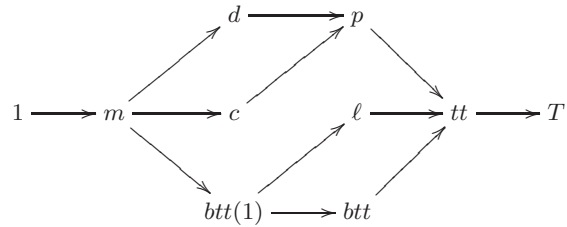
	
	\begin{remark}
		The proof of \Cref{positive} also applies to randomness over $3^{\omega}$ (and beyond).
		To see this, we consider the alphabet $\{0,1,2\}$ and let each $p(j)$ be an identity function and $\vee,\wedge$ be the maximum and minimum under the ordering $0<1<2$.
	\end{remark}

	\begin{definition}
		A truth-table reduction $\Phi^X$ is a \textbf{linear} reduction if
		each $\sigma_{f(n)}$ is of the form $\sigma_{f(n)} = \sum_{k=1}^{t_n} v_{f(n),k}$ or $\sigma_{f(n)} = 1+\sum_{k=1}^{t_n} v_{f(n),k} $ where addition is mod 2.
	\end{definition}
	\begin{theorem}\label{linear}
		$\MLR\not\le_{s,\ell}\Either(\MLR)$.
	\end{theorem}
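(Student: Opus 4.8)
The plan is to prove directly that no linear reduction $\Phi$ sends all of $\Either(\MLR)$ into $\MLR$, and in fact only the ``even half'' will matter. First I would put a linear reduction into normal form: for each $n$ there are a computable bit $c_n$ and computable finite sets $e_n,o_n\subseteq\omega$ — the even- and odd-indexed source positions queried at stage $n$ — with $\Phi^{A\oplus B}(n)=c_n\oplus e_n(A)\oplus o_n(B)$, where for finite $S\subseteq\omega$ and $X\in 2^\omega$ we write $S(X)=\bigoplus_{k\in S}X(k)$. Regarding each $e_n$ as a vector of the $\mathbb F_2$-vector space $V=\bigoplus_{k\in\omega}\mathbb F_2$, set $N=\{n:e_n\in\mathrm{span}_{\mathbb F_2}\{e_m:m<n\}\}$; since the $e_m$ are computably given finite sets, $N$ is computable. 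The argument then splits on whether $N$ is infinite.

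If $N$ is infinite, I would, for each $n\in N$, computably choose $F_n\subseteq\{0,\dots,n-1\}$ with $e_n=\bigoplus_{m\in F_n}e_m$, fix some $R\in\MLR$, and take $D=R\oplus\emptyset\in\Either(\MLR)$. Since $o_n(\emptyset)=0$ and $e_n(R)=\bigoplus_{m\in F_n}e_m(R)$, a one-line computation yields $\Phi^{D}(n)=\bigl(c_n\oplus\bigoplus_{m\in F_n}c_m\bigr)\oplus\bigoplus_{m\in F_n}\Phi^{D}(m)$ for every $n\in N$, so $\Phi^{D}(n)$ is a computable function of output bits below $n$. The computable martingale that leaves its capital unchanged off $N$ and, at each $n\in N$, stakes everything on the value this identity forces then has, along $\Phi^D$, capital that is nondecreasing and doubles at every stage in $N$, hence unbounded. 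So $\Phi^D\notin\MLR$, contradicting $D\in\Either(\MLR)$.

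If $N$ is finite, say $N\subseteq\{0,\dots,n_0-1\}$, then $\{e_n:n\ge n_0\}$ is linearly independent over $\mathbb F_2$ — otherwise $\bigoplus_{n\in G}e_n=\emptyset$ for a finite nonempty $G\subseteq[n_0,\infty)$, which puts $e_{\max G}\in\mathrm{span}\{e_m:m<\max G\}$ and hence $\max G\in N$. Now fix $R\in\MLR$, set $B=R$, and let $t_n=c_n\oplus o_n(R)$. Each equation $e_n(A)=t_n$ constrains only the bits of $A$ indexed by $e_n$, and every finite subsystem of $\{\,e_n(A)=t_n : n\ge n_0\,\}$ is solvable because the relevant $e_n$ are independent; so by compactness of $2^\omega$ there is $A\in 2^\omega$ with $e_n(A)=t_n$ for all $n\ge n_0$. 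For this $A$ we get $\Phi^{A\oplus R}(n)=c_n\oplus t_n\oplus o_n(R)=0$ whenever $n\ge n_0$, so $\Phi^{A\oplus R}$ is a finite set, hence computable and not ML-random, even though $A\oplus R\in\Either(\MLR)$ because $R\in\MLR$. In either case the arbitrary linear reduction $\Phi$ fails, so $\MLR\not\le_{s,\ell}\Either(\MLR)$.

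The conceptual crux is the dichotomy itself, together with the observation that the odd half is a red herring: a recurring linear dependence among the $e_n$ — detectable computably, since $N$ is decidable — becomes, once a genuine random real is fed into the even half, a computable correlation in the output that a martingale cashes in; whereas if the $e_n$ are eventually independent, the even half is wholly unconstrained, so an ML-random real can be routed through the odd half while the even half is tuned, by compactness, to annihilate the output. The remaining points are routine: computability of $N$ and the $F_n$, the fact that altering the finitely many output bits below $n_0$ cannot restore randomness, and that the tuned real $A$ need not be computable — harmless, since membership in $\Either(\MLR)$ requires only one of the two halves to be ML-random.
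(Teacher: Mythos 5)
Your proof is correct, but it takes a genuinely different route from the paper's. The paper's argument for \Cref{linear} is a single stage construction: after noting that $\Phi$ may be assumed to query a fresh (previously unqueried) bit infinitely often — else $\Phi^A$ is computable — and taking the fresh parity to be even without loss of generality, it starts from $\emptyset\oplus R$ and flips the fresh even bit $v_i$ whenever needed to force $\Phi^A(n_i)=1$; since a linear truth table is sensitive to every one of its variables and a fresh bit cannot disturb earlier computations, the $n_i$ form an infinite computable subset of $\Phi^A$, so $\Phi^A$ is not immune. You replace this diagonalization with an $\mathbb F_2$-linear dichotomy on the even query vectors $e_n$, and both of your cases are sound: membership in the span and the coefficient sets $F_n$ are computable by Gaussian elimination over the finitely many relevant coordinates; in the recurrent-dependence case non-immunity is not available, so your passage to a computable martingale is the right move (and legitimate, since every ML-random real is computably random); in the eventually-independent case the compactness argument solving $e_n(A)=t_n$ is valid, and the non-computability of the tuned $A$ is indeed harmless because only the odd half $R$ needs to be random. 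What the paper's proof buys is brevity — one construction, resting on the single observation that flipping one queried bit flips a linear output. What yours buys is structural information: your second case makes the output cofinitely zero rather than merely non-bi-immune, and the dependence/independence dichotomy isolates exactly where the obstruction lives, in a spirit closer to the controllability analysis the paper later uses in the proof of \Cref{btt}.
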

	\begin{proof}
		We may assume that $\Phi$ infinitely often queries a bit that it has not queried before (else $\Phi^A$ is always computable).
		Without loss of generality, suppose $\Phi$ infinitely often queries an even bit it has not queried before.
		We construct $A$ in stages, beginning with $A_0 = \emptyset\oplus R$ for $R$ an arbitrary random real.

		For the infinitely many $n_i$ that query an unqueried even bit, let $v_i$ be the least such bit.
		Then at stage $s+1$, set $v_i=1$ if $\Phi^{A_s}(n_i) = 0$.
		Changing a single bit in a linear $\sigma_{f(n_i)}$ changes the output of $\sigma_{f(n_i)}$, so that $\Phi^{A}(n) = \Phi^{A_{s+1}}(n_i) = 1$.

		As these $n_i$ form a computable set, $\Phi^A$ fails to be immune, and so cannot be random.\qed
	\end{proof}

	\begin{definition}
		A truth-table reduction $\Phi^X$ is a \textbf{bounded truth-table} reduction if
		there is a $c$ such that there are most $c$ variables in each $\sigma_{f(n)}$ (in particular we say it is a \textbf{$btt(c)$} reduction).
	\end{definition}
	\begin{theorem}\label{btt}
		$\MLR\not\le_{s,btt}\Either(\MLR)$.
	\end{theorem}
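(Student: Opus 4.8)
The plan is to argue by induction on $c$: for each $c$, I will show that no $btt(c)$ reduction $\Phi$ (with table function $f$) satisfies $\Phi^X\in\MLR$ for all $X=A\oplus B\in\Either(\MLR)$. For a given $\Phi$ I will exhibit such an $X$ with $\Phi^X\notin\MLR$, and in almost every subcase I will in fact arrange that $\Phi^X$ agrees with a computable function on an infinite computable set of inputs, which already forces non-randomness, since an ML-random real is bi-immune (this is the mechanism used in the proofs of \Cref{positive} and \Cref{linear}); in the one remaining subcase I will restrict $\Phi$ to an infinite computable set of inputs, read off a genuine $btt(c-1)$ reduction, and quote the inductive hypothesis, using that if the restriction of $\Phi^X$ to an infinite computable set of coordinates is non-random then so is $\Phi^X$. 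First I clear away easy cases. If infinitely many tables $\sigma_{f(n)}$ are constant I am done for any $X$; if infinitely many essentially depend on even bits only, then setting the even half of $X$ to $\emptyset$ and the odd half to an arbitrary random $R$ makes $\Phi^X$ computable on those inputs, and symmetrically for odd bits; and if $c\ge 2$ and infinitely many tables essentially depend on at most $c-1$ bits, I delete dummy variables and invoke the inductive hypothesis on that input set. So I may assume almost every $\sigma_{f(n)}$ is \emph{mixed}: it essentially depends on at least one even bit, at least one odd bit, and at most $c$ bits in all. Let $M$ be the (infinite, computable) set of such $n$. When $c=1$ there are no mixed tables, so the base case is subsumed by the easy cases.

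Next I split on how the queries overlap. Enumerating $M$ in increasing order and greedily collecting inputs whose sets of query bits are pairwise disjoint, either I obtain an infinite (hence computable) $M'\subseteq M$ with pairwise disjoint query-bit sets, or the greedy ``used'' set stabilizes at a finite set $U^*$ that is met by the query bits of almost every $n\in M$. In the second case, by the pigeonhole principle there is a single position $p\in U^*$ that is a query bit of $\sigma_{f(n)}$ for infinitely many $n\in M$; say $p$ is even. Substituting a fixed value $v$ for $p$ in each such table yields, on the infinite computable set $M_p$ of those $n$, a bona fide $btt(c-1)$ reduction $\Psi$ (represent the resulting function of the remaining $\le c-1$ bits by a formula in those variables). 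By the inductive hypothesis there is $Y\in\Either(\MLR)$ with $\Psi^Y\notin\MLR$; taking $X$ to agree with $Y$ except that the bit of the even half of $X$ at position $p$ is set to $v$, I get $X\in\Either(\MLR)$ — finitely perturbing one half of a member of $\Either(\MLR)$, or not touching the random half, stays inside $\Either(\MLR)$ — and $\Phi^X$ restricted to $M_p$ reindexes to $\Psi^Y$, so $\Phi^X\notin\MLR$.

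It remains to treat the case where I have an infinite computable $M'\subseteq M$ with pairwise disjoint query-bit sets; for $n\in M'$, regard $\sigma_{f(n)}$ as a Boolean function of its even query bits and its odd query bits, and distinguish three subcases, all decided by a decidable property of $\sigma_{f(n)}$. If for infinitely many $n\in M'$ some assignment to the even query bits makes $\sigma_{f(n)}$ constantly $0$ as a function of the odd bits, I place those assignments into the even half of $X$ — consistent across $n$ by disjointness — and let the odd half be a fixed random $R$; then $\Phi^X$ is identically $0$ on an infinite computable set, and $X\in\Either(\MLR)$ because its odd half is $R$. Symmetrically if for infinitely many $n$ some even-assignment makes $\sigma_{f(n)}$ constantly $1$. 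Otherwise, for almost all $n\in M'$ no assignment to the even query bits makes $\sigma_{f(n)}$ constant, so I instead let the even half of $X$ be a fixed random $R$: for each such $n$, substituting $R$'s bits leaves a function of the odd query bits that is neither constantly $0$ nor constantly $1$, hence attains $0$, and I set the odd query bits (consistently, by disjointness) to witness this, again making $\Phi^X$ identically $0$ on an infinite computable set, with $X\in\Either(\MLR)$ since its even half is $R$. In every subcase $\Phi^X$ is not bi-immune, hence not ML-random.

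The main obstacle is exactly the overlap of query bits: without disjointness, choosing a controlled bit to help one input can spoil another, so one cannot force outputs input-by-input. The greedy dichotomy quarantines this difficulty into the single scenario in which a fixed finite set of bits is queried cofinitely often, and there freezing one of those bits and appealing to the inductive hypothesis does the job — this is also where the finiteness of $c$ is used essentially. Organizing the final part — classifying each small table by which partial assignments to one side of its variables drive it to a constant — is the bookkeeping that a Karnaugh map makes transparent.
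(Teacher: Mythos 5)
Your proof is correct and follows essentially the same strategy as the paper's: induction on the bound $c$, the greedy dichotomy on disjointness of the query sets, hard-coding a frequently-queried bit to drop to $btt(c-1)$ in the non-disjoint case, and a case split on whether one parity's bits can force the output in the disjoint case. The only differences are cosmetic (you pigeonhole a single recurring bit rather than freezing the whole finite set $H$, and you split ``control'' into forcing $0$ versus forcing $1$), so no further comparison is needed.
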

	\begin{proof}
		Suppose that $\Phi$ is a $btt$-reduction from $\Either(\MLR)$ to $\MLR$ and let $c$ be its bound on the number of oracle bits queried.
		We proceed by induction on $c$, working to show that an $X=X_0\oplus X_1$ exists with $X_0$ or $X_1$ ML-random, for which $\Phi^X$ is not bi-immune.\\

		\noindent\emph{Base for the induction ($c=1$).} As $btt(1)$ reductions are linear, it is enough to appeal to \Cref{linear}.
		But as a warmup for what follows, we shall prove this case directly.
		Let $\Phi$ be a $btt(1)$ reduction. Here $\Phi^X(n)=f_n(X(q(n))$ where $f_n:\{0,1\}\to\{0,1\}$, $q:\omega\to\omega$ is computable, and $\{f_n\}_{n\in\omega}$ is computable.
		(If no bits are queried on input $n$, let $f_n$ be the appropriate constant function.)

		If for infinitely many $n$, $f_n$ is the constant function $1$ or $0$, and the claim is obvious.

		Instead, suppose $f_n$ is only constant finitely often, i.e. $f_n(x) = x$ or $f_n(x) = 1-x$ cofinitely often.
		Without loss of generality, there are infinitely many $n$ such that $q(n)$ is even. Let $X = \emptyset\oplus R$, where $R$ is an arbitrary ML-random set.

		As $X(q(n)) = 0$ and $f(x)$ is either identity or $1-x$ infinitely often,
		there is an infinite computable subset of either $\Phi^X$ or $\overline{\Phi^{X}}$ so $\Phi^X$ is not bi-immune.\\

		\noindent\emph{Induction step.} Assume the $c-1$ case, and consider a $btt(c)$ reduction $\Phi$.

		Now there are uniformly computable finite sets $Q(n)=\{q_1(n),\dots,q_{d_n}(n)\}$ and Boolean functions $f_n:\{0,1\}^{d_n}\to \{0,1\}$ such that for all $n$,
		$\Phi^X(n)=f_n(X({q_1(n)}),\dots,X(q_{d_n}(n)))$ and $d_n\le c$.

		Consider the greedy algorithm that tries to find a collection of pairwise disjoint $Q(n_i)$ as follows:
		\begin{itemize}
			\item[-] $n_0=0$.
			\item[-] $n_{i+1}$ is the least $n$ such that $Q(n)\cap\bigcup_{k<i}Q(n_k) = \emptyset$.
		\end{itemize}

		If this algorithm cannot find an infinite sequence, let $i$ be least such that $n_{i+1}$ is undefined, and define $H = \bigcup_{k\leq i}Q(n_k)$.
		It must be that for $n>n_i$ no intersection $Q(n)\cap H$ is empty.
		Thus there are finitely many bits that are in infinitely many of these intersections, and so are queried infinitely often.
		We will ``hard code" the bits of $H$ as $0$ in a new function $\hat{\Phi}$.

		To that end, define $\hat{Q}(n) = Q(n)\setminus H$, and let $\hat f$ be the function that outputs the same truth tables as $f$, but for all $n\in H$, $v_{n}$ is replaced with $\bot$.
		List the elements of $\hat{Q}$ in increasing order as $\{\hat{q}_1(n), \dots, \hat{q}_{e_n}(n)\}$.
		Now if $X\cap H = \emptyset$, any $q_i(n)\in H$ have $X(q_i(n)) = 0$, so that $\Phi^X = \hat\Phi^X$, as for every $n$,
		\[
			f(X(q_1(n)),\dots X(q_{d_n}(n)))= \hat{f_n}(X(\hat{q}_1(n)),\dots,X(\hat{q}_{e_n}(n)).
		\]

		As $Q$ and the $f_n$ are uniformly computable and $H$ is finite, $\hat{Q}$ and the $\hat{f}_n$ are also uniformly computable.
		As no intersection $Q(n)\cap H$ was empty, $e_n < d_n \leq c$. So $\hat{Q}$ and the $\hat{f}_n$ define a $btt(c-1)$-reduction.
		By the induction hypothesis, there is a real $A\in \Either(\MLR)$ such that $\hat\Phi^A$ is not random.
		$\Either(\MLR)$ is closed under finite differences (as $\MLR$ is), so the set $B = A\setminus H$ witnesses $\Phi^B = \hat\Phi^A$, and $\Phi^B$ is not random as desired.

		This leaves the case where the algorithm enumerates a sequence of pairwise disjoint $Q(n_i)$.

		Say that a collection of bits $C(n)\subseteq Q(n)$ can \emph{control} the computation $\Phi^X(n)$ if
		there is a way to assign the bits in $C_n$ so that $\Phi^X(n)$ is the same no matter what the other bits in $Q(n)$ are.
		For example, $(a \land b)\lor c$ can be controlled by $\{a, b\}$, by setting $a=b=1$.
		Note that if the bits in $C(n)$ are assigned appropriately, $\Phi^X(n)$ is the same regardless of what the rest of $X$ looks like.

		Suppose now that there are infinitely many $n_i$ such that some $C(n_i)$ containing only even bits controls $\Phi^X(n_i)$.
		Collect these $n_i$ into a set $E$. Let $X_1$ be an arbitrary ML-random set.
		As there are infinitely many $n_i$, and it is computable to determine whether an assignment of bits controls $\Phi^X(n)$, $E$ is an infinite computable set.
		For $n\in E$, we can assign the bits in $Q(n)$ to control $\Phi^X(n)$, as the $Q(n)$ are mutually disjoint. Now one of the sets
		\begin{align*}
			\{n\in E\mid \Phi^X(n) = 0\}&&\text{or}&&\{n\in E\mid \Phi^X(n) = 1\}
		\end{align*} is infinite. Both are computable, so in either case $\Phi^X$ is not bi-immune.

		Now suppose that cofinitely many of the $n_i$ cannot be controlled by their even bits. Here let $X_0$ be an arbitrary ML-random set.
		For sufficiently large $n_i$, no matter the values of the even bits in $Q(n_i)$, there is a way to assign the odd bits so that $\Phi^X(n_i) = 1$.
		By pairwise disjointness, we can assign the odd bits of $\bigcup Q(n_i)$ as needed to ensure this, and assign the rest of the odd bits of $X$ however we wish.
		Now the $n_i$ witness the failure of $\Phi^X$ to be immune. \qed
	\end{proof}

\section{Arbitrarily many columns}
	It is worth considering direct sums with more than two summands.
	In this new setting, we first prove the analog of Theorem 2 of \cite{KHW} for more than two columns,
	before sketching the modifications necessary to prove analogues of \Cref{positive,linear,btt}.

	Recall that the infinite direct sum $\bigoplus_{i=0}^\omega A_i$ is defined as $\{\langle i, n\rangle\mid n\in A_i\}$, where
	$\langle \cdot, \cdot\rangle:\omega^2\rightarrow\omega$ is a fixed computable bijection.

	\begin{definition}
		For each $\C\subseteq 2^\omega$ and ordinal $\alpha\leq\omega$, define
		\begin{eqnarray*}
		\Some(\C, \alpha) &=& \left\{\bigoplus_{i=0}^\alpha A_i \in 2^\omega\middle| \exists i\ A_i\in\mathcal{C}\right\},
		\\
		\Many(\C) &=& \left\{\bigoplus_{i=0}^\omega A_i \in 2^\omega\middle| \exists^\infty i\ A_i\in\mathcal{C}\right\}.
		\end{eqnarray*}
	\end{definition}

	These represent different ways to generalize $\Either(\C)$ to the infinite setting:
	we may know that some possibly finite number of columns $A_i$ are in $\C$, or that infinitely many columns are in $\C$. If $\alpha=\omega$, these notions are $m$-equivalent, so we can restrict our attention to $\Some(\MLR, \alpha)$ without loss of generality:

\begin{theorem}[due to Reviewer 2]\label{thm:reviewer-2}
    $\Some(\C, \omega)\equiv_{s,m}\Many(\C)$.
\end{theorem}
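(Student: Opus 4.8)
The plan is to verify the two strong $m$-reductions separately; neither requires any hypothesis on $\C$.

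For $\Some(\C,\omega)\le_{s,m}\Many(\C)$, I would simply observe that $\Many(\C)\subseteq\Some(\C,\omega)$: if infinitely many columns $A_i$ lie in $\C$, then certainly some column does. Hence the identity reduction $\Phi^X=X$ works, since for any $D\in\Many(\C)$ we get $\Phi^D=D\in\Some(\C,\omega)$, and the identity is trivially an $m$-reduction.

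For the converse $\Many(\C)\le_{s,m}\Some(\C,\omega)$, the idea is to make infinitely many verbatim copies of every column of the input, so that a single good column in the source becomes infinitely many good columns in the target. Concretely, using the fixed computable pairing $\langle\cdot,\cdot\rangle$, I would define $\Psi$ so that the $\langle i,k\rangle$-th column of $\Psi^X$ equals the $i$-th column of $X$; that is, for $m=\langle\langle i,k\rangle,n\rangle$ put $m\in\Psi^X$ iff $\langle i,n\rangle\in X$. Since $m\mapsto\langle i,n\rangle$ is computable, $\Psi$ is an honest $m$-reduction (each output bit is a single input variable). To see it witnesses the reduction, take $D=\bigoplus_{i}A_i\in\Some(\C,\omega)$ and fix $i_0$ with $A_{i_0}\in\C$; writing $\Psi^D=\bigoplus_j B_j$, we have $B_{\langle i_0,k\rangle}=A_{i_0}\in\C$ for every $k\in\omega$, so infinitely many columns of $\Psi^D$ lie in $\C$ and $\Psi^D\in\Many(\C)$. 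Note that $\Psi$ is fixed independently of which column of $D$ is in $\C$, so the reduction is genuinely uniform.

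There is no substantial obstacle here; the statement is essentially an observation, and the only thing to be careful about is the bookkeeping with the pairing function so that ``column $\langle i,k\rangle$ of the output $=$ column $i$ of the input'' is realized by a single-variable truth table rather than something using connectives.
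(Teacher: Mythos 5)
Your proof is correct and matches the paper's argument essentially verbatim: the easy direction via the inclusion $\Many(\C)\subseteq\Some(\C,\omega)$ with the identity reduction, and the converse via the single-variable reduction $\langle\langle i,k\rangle,n\rangle\in\Psi^X\iff\langle i,n\rangle\in X$ that duplicates each column infinitely often. No differences worth noting.
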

\begin{proof}
    The $\leq_{s, m}$ direction follows from the inclusion $\Many(\C)\subseteq\Some(\C, \omega)$.
    
    For $\geq_{s, m}$, %, consider the reduction that $i$-th column maps to $\langle i,j\rangle$-th column for all $j$.
    let $B\in\Some(\C,\omega)$ and define $A$ by:
    \[\langle \langle i,j\rangle,n\rangle \in A \iff \langle i,n\rangle\in B\]
    Then $A\le_m B$ and $A\in\Many(\C)$, so that $\Some(\C, \omega)\geq_{s, m}\Many(\C)$. \qed
\end{proof}

% For fixed $n$, $\Some(\MLR,n)\ge_{s,m}\Many(\MLR)$.
% For dissertation: $\Some(\C, \omega)\equiv_{s,1}\Many(\C)$ via infinite splitting of a column
%Not refereed: For fixed n, $\Some(\MLR,n)\not\le_{s,tt}\Many(\MLR)$

\subsection{Truth-Table Reducibility}
	Recall that a real $A$ is Martin-L\"of random iff there is a positive constant $c$ (the randomness deficiency) so that for any $n$, $K(A_i\upto n)\geq n-c$).
	Let $K_s(\sigma)$ be a computable, non-increasing approximation of $K(\sigma)$ at stages $s\in\omega$.

	\begin{theorem}\label{ttSome}
		For all ordinals $\alpha\leq\omega$, $\MLR\leq_{s,tt}  \Some(\MLR, \alpha)$.
	\end{theorem}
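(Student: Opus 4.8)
\noindent\emph{Proof proposal.}
The plan is to give one construction, uniform in $\alpha$, that reads the columns of the oracle $X=\bigoplus_{i=0}^{\alpha}A_i$ and copies long \emph{contiguous} chunks of them into the output $Y=\Phi^X$, cycling through the columns and using the approximation $K_s$ to decide when to move on. Fix a computable enumeration $(i(j),k(j))_{j\in\omega}$ of all pairs consisting of a column index $i$ and a guess $k\in\omega$ in which every column index occurs with arbitrarily large second coordinate (e.g.\ enumerate the pairs in order of $i+k$). The construction runs in rounds; in round $j$ we have a source column $A_{i(j)}$ and a deficiency guess $k(j)$. Keeping a pointer into each column so that whatever has ever been copied out of $A_i$ is an initial segment of $A_i$, in round $j$ we copy $A_{i(j)}$ bit by bit into $Y$ while simultaneously searching, among the bits of $A_{i(j)}$ copied so far, for a pair $(m,s)$ with $K_s(A_{i(j)}\upto m)<m-k(j)$. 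When such a pair is found, round $j$ ends and round $j+1$ begins; if no such pair is ever found, round $j$ runs forever.

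To see that $\Phi$ is a truth-table reduction I would synchronize the stage $s$ of the round-$j$ search with the number of bits already emitted in round $j$, so that the bit of $Y$ at position $n$ is a computable Boolean function of an initial segment of $X$ of computably bounded length: to compute $\Phi^X(n)$ one simulates at most $n$ rounds, each for at most $n$ stages, each stage inspecting only a computably bounded portion of $X$ and of the $K_s$ approximation. In particular $\Phi^X$ is total for every $X$.

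The correctness argument has two halves. First, the process settles: if $A_i\notin\MLR$, then for every $k$ there is an $m$ with $K(A_i\upto m)<m-k$, and (since $K_s\ge K$ and enough of $A_i$ is eventually copied inside a non-ending round) this is detected, so every round on such a column ends; whereas if $A_i\in\MLR$ with deficiency $c$ and $k\ge c$, then no pair $(m,s)$ as above exists, so a round on $A_i$ with guess $k$ never ends. Since some column is ML-random, not every round can end — otherwise the enumeration would eventually reach a pair $(i^*,k)$ with $A_{i^*}\in\MLR$ and $k$ above its deficiency, giving a non-ending round — so there is a first non-ending round, and by the above it is on some column $A_{i^*}\in\MLR$. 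Second, from that round on $Y$ is a contiguous tail of $A_{i^*}$: everything emitted before is a fixed finite string, and $Y\upto n$ is, by a computable pattern determined by finitely many (large but fixed) round boundaries, an interleaving of an initial segment of $A_{i^*}$ with that fixed string. Hence $Y\upto n$ and $A_{i^*}\upto(n-b)$, for a fixed constant $b$, are computable from each other with $O(1)$ overhead, so $K(Y\upto n)\ge K(A_{i^*}\upto(n-b))-O(1)\ge n-O(1)$ and $Y\in\MLR$.

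The main obstacle I anticipate is the tension between needing to read possibly very long stretches of a ``bad'' column before $K_s$ certifies that it is bad, and keeping the reduction truth-table, i.e.\ with computably bounded use; synchronizing the search stage with the output length is exactly what controls the use even though the true length of each non-final round is not computably bounded. A secondary point to get right is the bookkeeping that keeps each column's total contribution to $Y$ a contiguous initial segment, so that $Y$ differs from a tail of $A_{i^*}$ only by a fixed finite prefix — without this one would only get $K(Y\upto n)\ge n-o(n)$, i.e.\ effective dimension $1$ rather than ML-randomness. (For finite $\alpha$ one could instead argue inductively, peeling off one column and invoking \Cref{ref-cie} at each step, and the $\alpha=2$ case of the construction above re-proves \Cref{ref-cie}; but the inductive route does not reach $\alpha=\omega$, whereas the construction treats all $\alpha\le\omega$ at once.)
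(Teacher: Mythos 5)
Your proposal is correct and follows essentially the same strategy as the paper's proof: cycle through the columns with ever-larger deficiency guesses, use the computable approximation $K_s$ to detect when the current column violates its guess, and argue that the process must eventually park forever on an ML-random column, so the output is random from some point on. The only real difference is bookkeeping: the paper writes bit $s$ of the current column into position $s$ of the output, so the settled output is a finite variant of a random real and no complexity calculation is needed, whereas your contiguous-block copying requires the small $K(Y\upto n)\ge K(A_{i^*}\upto (n-b))-O(1)$ argument you supply — both verifications go through.
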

	\begin{proof}
		Given a set $A=\bigoplus_{i=0}^\alpha A_i$, we start by outputting bits from $A_0$,
		switching to the next $A_i$ whenever we notice that the smallest possible randomness deficiency increases.
		This constant $c$ depends on $s$ and changes at stage $s+1$ if
		\begin{equation}\label{Kolcond}
			(\exists n\le s+1)\quad K_{s+1}(A_i\upto n)<n-c_s.
		\end{equation}
		In detail, fix a map $\pi:\omega\rightarrow\alpha$ so that for all $y$, the preimage $\pi^{-1}(\{y\})$ is infinite. Let $n(0) = 0$, and if \Cref{Kolcond} occurs at stage $s$, set $n(s+1) = n(s) + 1$, otherwise $n(s+1) = n(s)$. Finally, define $A(s) = A_{\pi(n(s))}(s)$.
		
		As some $A_i$ is in $\MLR$, switching will only occur finitely often. So there is an stage $s$ such that for all larger $t$, $A(t) = A_i(t)$. Thus our output will have an infinite tail that is ML-random, and hence will itself be ML-random.

		To guarantee that this is a truth-table reduction, we must check that this procedure always halts.
		But this is immediate, as \Cref{Kolcond} is computable for all $s\in\omega$ and $A_i\in 2^\omega$.\qed
	\end{proof}
	
	\subsection{Positive Reducibility}
	We say that a variable is from a certain column if its index codes a location in that column, i.e. $n_k$ is from $A_i$ if $k = \langle i, n\rangle$ for some $n$.

	\begin{theorem}\label{positiveSome} For all $\alpha\leq\omega$, $\MLR\not\leq_{s, p} \Some(\MLR, \alpha)$.
	\end{theorem}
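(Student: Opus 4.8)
The plan is to rerun the two–case argument from the proof of \Cref{positive}, with ``parity'' replaced by ``column.'' We may as well assume $\alpha\ge 1$ (for $\alpha=0$ we have $\Some(\MLR,0)\equiv_{s,m}\MLR$ and the identity is already a positive reduction, so the statement can only hold for $\alpha\ge1$). Given a positive reduction $\Phi^X$ with table function $f$, first put each $\sigma_{f(n)}$ into conjunctive normal form with only positive literals, $\sigma_{f(n)}=\bigwedge_{k=1}^{t_n}\bigvee_i v_{i,k,n}$, calling each inner disjunction a \emph{clause}; if $\sigma_{f(n)}$ is a constant for infinitely many (necessarily computably found) $n$ the claim is immediate, so we discard those $n$. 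Observe that ``some clause of $\sigma_{f(n)}$ contains a variable from column $c$'' is decidable uniformly in $c$ and $n$, since membership of an index in column $c$ is decided by the fixed pairing function.

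Now split on: \textbf{Case 1} --- there is a column $c$ and infinitely many $n$ for which \emph{every} clause of $\sigma_{f(n)}$ contains a variable from column $c$; versus \textbf{Case 2} --- its negation, namely for \emph{every} column $c$ and all but finitely many $n$, \emph{some} clause of $\sigma_{f(n)}$ contains no variable from column $c$. In Case 1, fix a second column $c'\ne c$ (available since $\alpha\ge1$), let $R$ be an arbitrary ML-random real, and set $A_{c'}=R$, $A_c=\omega$, and $A_i=\emptyset$ for every other $i$, so that $A=\bigoplus_i A_i\in\Some(\MLR,\alpha)$. For each of the infinitely many $n$ given by Case 1, every clause of $\sigma_{f(n)}$ has a literal from column $c$, which is true under $A$; hence $\Phi^A(n)=1$. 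The set of such $n$ is computable, so $\Phi^A$ is not immune, hence not ML-random.

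In Case 2, instantiate the hypothesis at the single column $0$: set $A_0=R$ for an arbitrary ML-random $R$ and $A_i=\emptyset$ for all $i\ge1$, so again $A\in\Some(\MLR,\alpha)$. For all but finitely many $n$, some clause of $\sigma_{f(n)}$ has all its variables in columns $\ge1$, hence all assigned $0$ by $A$, so that clause --- and therefore $\sigma_{f(n)}$ --- is false and $\Phi^A(n)=0$. Thus $\Phi^A$ is cofinitely $0$, hence computable, hence not ML-random.

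The one point that keeps this from being a verbatim transcription of \Cref{positive} is that the symmetric phrasing of Case 2 used there (``a clause with only variables of the \emph{other} parity'') has no literal analogue once there are more than two columns, so one might worry the case split needs strengthening. The observation that makes it go through unchanged is that no strengthening is needed: the bare negation of Case 1, read off at column $0$ alone, already supplies, for cofinitely many $n$, a clause avoiding column $0$, and collapsing every column but column $0$ to $\emptyset$ falsifies all such clauses while keeping $A$ ML-random. The case $\alpha=\omega$ requires no separate treatment. I expect the only real care in the write-up to be confirming that the two cases are exhaustive and that CNF conversion together with column-membership are uniformly computable; beyond that bookkeeping I anticipate no obstacle.
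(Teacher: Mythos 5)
Your proposal is correct and follows essentially the same route as the paper: the identical two-case split (some column hits every clause of $\sigma_{f(n)}$ for infinitely many $n$, versus its negation read off at column $0$), with the same witnesses ($\omega$ in the controlling column plus one random column in Case 1; a random column $0$ with all other columns empty in Case 2). Your extra remarks on the $\alpha=0$ edge case and on the computability of the case distinction are sensible bookkeeping that the paper leaves implicit.
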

	\begin{proof}
		Let $\Phi^X$ be a positive reduction. Assume each $\sigma_{f(n)}$ is written in conjunctive normal form. We sketch the necessary changes to the proof of \Cref{positive}:

		\noindent\emph{Case 1:} There is an $i$ such that there are infinitely many $n$ such that every clause of $\sigma_f(n)$ contains a variable from $A_i$.

		Without loss of generality, let that column be $A_0=\omega$. The remaining $A_i$ can be arbitrary, as long as one of them is random.

		\noindent\emph{Case 2:} For all $i$, for almost all $n$, there is a clause in $\sigma_f(n)$ that contains no variables from $A_i$.

		In particular this holds for $i=0$, so let $A_0\in\MLR$ and the remaining $A_i=\emptyset$.\qed
	\end{proof}

\subsection{Linear Reducibility}

	\begin{theorem}\label{linearSome}
	For all $\alpha\leq\omega$, $\MLR\not\leq_{s, \ell}\Some(\MLR, \alpha)$.
	\end{theorem}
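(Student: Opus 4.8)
The plan is to adapt the proof of \Cref{linear}. That proof exploited two features of linear tables: flipping any queried variable flips the value of $\sigma_{f(n)}$, and one may earmark a ``controllable'' column (fixed to $\emptyset$) while a different column carries an arbitrary ML-random real. With many columns the second idea becomes: reserve a single column to hold the ML-random real and regard \emph{every} bit outside that column as free to set. So I would fix a linear reduction $\Phi$, with uniformly computable query sets $Q(n)$ (the finite set of variables of $\sigma_{f(n)}$), and build $A=\bigoplus_{i=0}^{\alpha}A_i$ with one column ML-random and $\Phi^A$ possessing an infinite computable subset, hence not immune, hence not ML-random.

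First dispose of the degenerate case: if $V:=\bigcup_n Q(n)$ is finite then $\Phi^A$ is computable for every $A$, so any $A$ with a random column refutes the reduction; hence assume $V$ is infinite. Next, the combinatorial heart. Since columns $0$ and $1$ are disjoint, every bit of $V$ either avoids column $0$ or avoids column $1$, so one of these two sub-collections of $V$ is infinite; say infinitely many bits of $V$ avoid column $0$, and put $i_0=0$. Because each $Q(n)$ is finite, these infinitely many bits are first queried at infinitely many distinct inputs, and the set of such inputs is computable (it only depends on $Q$ and on decoding column indices); enumerate it as $n_0<n_1<\cdots$, and for each $n_i$ let $v_i$ be the least bit of $Q(n_i)$ that avoids column $i_0$ and was queried at no input $m<n_i$.

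Now the construction. Fix $A_{i_0}=R$ for an arbitrary $R\in\MLR$ (this alone guarantees $A\in\Some(\MLR,\alpha)$), set every bit outside column $i_0$ to $0$ by default, and process $n_0,n_1,\dots$ in order. At stage $i$, every bit occurring in $\sigma_{f(n_i)}$ other than $v_i$ already has its final value: bits in column $i_0$ are governed by $R$; any bit of the form $v_j$ in $Q(n_i)$ has $j<i$, since a later $v_k$ was unqueried through stage $n_k>n_i$ and so cannot lie in $Q(n_i)$; and every remaining bit stays $0$ permanently. Hence $\sigma_{f(n_i)}$ is, up to the already-determined constant, linear in $v_i$ alone, and I can choose $v_i\in\{0,1\}$ forcing $\Phi^A(n_i)=1$. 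Then $\{n_i:i\in\omega\}$ is an infinite computable subset of $\Phi^A$, so $\Phi^A$ is not immune, hence not ML-random, while $A\in\Some(\MLR,\alpha)$; as $\Phi$ was arbitrary this yields $\MLR\not\le_{s,\ell}\Some(\MLR,\alpha)$.

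The main obstacle is bookkeeping rather than a new idea: unlike the two-column case there is no clean ``even/odd'' split into a controllable half and a random half, so one must first commit to the single reserved column $i_0$ and then check that the diagonalizing bits $v_i$ interact only in one temporal direction — a freshly chosen $v_i$ must never have occurred in an earlier $\sigma_{f(n_j)}$, and no future $v_k$ may occur in $\sigma_{f(n_i)}$ — which is exactly what the ``least unqueried bit'' choice secures. The argument is uniform in $\alpha$ for all $1\le\alpha\le\omega$ (only columns $0$ and $1$ are ever used), and for $\alpha=1$ it specializes to the already-established two-column statement, just as \Cref{positiveSome} specializes to \Cref{positive}.
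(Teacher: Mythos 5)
Your proof is correct, and the engine is the same as the paper's: flipping a single queried variable of a linear table flips its value, so one reserves a column for an arbitrary ML-random real, finds an infinite computable set of inputs $n_i$ at each of which a fresh controllable bit $v_i$ is queried, and sets $v_i$ to force $\Phi^A(n_i)=1$, killing immunity. Where you genuinely differ is the decomposition. The paper splits on whether some single column $A_i$ receives infinitely many fresh queries: if so, that column becomes the controllable region (the construction of \Cref{linear} runs with $A_i$ in place of the even bits and the random placed in some other column $A_j$); if not — possible only for $\alpha=\omega$ — every column is queried finitely often, so all queried bits can be zeroed without destroying the randomness of column $0$ and the output is outright computable. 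You instead choose the reserved column $i_0\in\{0,1\}$ so that infinitely many queried bits \emph{avoid} it (always possible once $\bigcup_n Q(n)$ is infinite, since each bit misses at least one of columns $0$ and $1$), and make the controllable region everything outside column $i_0$ rather than a single column. This buys one uniform construction that absorbs the paper's $\alpha=\omega$ edge case — if every column is queried only finitely often, infinitely many queried bits still avoid column $0$ and your diagonalization runs unchanged — at the price of the extra bookkeeping you correctly supply: the $v_i$ are pairwise distinct and no future $v_k$ can occur in $\sigma_{f(n_i)}$ because $v_k$ is by definition unqueried before input $n_k>n_i$. Both arguments dispose of the finitely-many-total-queries case identically, and both give a computable infinite subset of $\Phi^A$ while keeping the reserved column untouched, so $A\in\Some(\MLR,\alpha)$.
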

	\begin{proof}
		We may assume that $\Phi$ infinitely often queries a bit it has not queried before (else $\Phi^A$ is always computable).
		If there is an $i$ such that $\Phi$ infinitely often queries a bit of $A_i$ it has not queried before,
		the stage construction from \Cref{linear} can be carried out with $A_i$ standing in for $A_0$, and some other $A_j\in\MLR$.

		That case always occurs for $\alpha<\omega$, but may not when $\alpha = \omega$.
		That is, it may the the case that $\Phi$ only queries finitely many bits of each $A_i$.
		Letting each $A_i$ be random, these bits may be set to $0$ without affecting the randomness of any given column, so we could set $A_0\in\MLR$ while other $A_i=\emptyset$.
		\qed
	\end{proof}

\subsection{Bounded Truth-Table Reducibility}

	As $btt(1)$ reductions are linear, Theorem \ref{linearSome} provides the base case for induction arguments in the vein of \Cref{btt}.
	So for each theorem, we can focus our attention on the induction step:
	\begin{theorem}
		For all $\alpha\leq\omega$, $\Many(\MLR)\not\leq_{s, btt}\Some(\MLR, \alpha)$.
	\end{theorem}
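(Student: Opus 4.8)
The plan is to follow the proof of \Cref{btt} as closely as possible, inducting on the bound $c$ of the given $btt(c)$-reduction $\Phi$, which we assume, toward a contradiction, witnesses $\Many(\MLR)\le_{s,btt}\Some(\MLR,\alpha)$. The base case $c=1$ reduces to \Cref{linearSome}, since $btt(1)$-reductions are linear, so all the work is in the induction step. Write $\Phi^X(n)=f_n(X\upto Q(n))$ with $Q,\{f_n\}$ uniformly computable and $|Q(n)|=d_n\le c$, and run the greedy search of \Cref{btt} for an infinite, pairwise disjoint family $Q(n_0),Q(n_1),\dots$ of query sets.

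If the greedy search halts, let $H$ be the union of the finitely many $Q(n_k)$ it produced. Every later $Q(n)$ meets $H$, so only finitely many input bits occur in infinitely many of these intersections; hard-code those bits of $H$ to $0$ exactly as in \Cref{btt} to obtain a uniformly computable $btt(c-1)$-reduction $\hat\Phi$. The induction hypothesis gives $A\in\Some(\MLR,\alpha)$ with $\hat\Phi^A\notin\Many(\MLR)$, and since $\Some(\MLR,\alpha)$ is closed under finite differences, $B=A\setminus H\in\Some(\MLR,\alpha)$ satisfies $\Phi^B=\hat\Phi^A\notin\Many(\MLR)$. This half is routine once the two-column case is in hand.

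The substantive half is when the greedy search produces an infinite, computable, pairwise disjoint family $\{Q(n_i)\}$. Designate one input column, say column $0$, to be kept ML-random, and leave the remaining input columns free. Following \Cref{btt}, for each $i$ ask whether some $C(n_i)\subseteq Q(n_i)$ consisting only of positions outside column $0$ controls $\Phi^X(n_i)$. If this holds for infinitely many $i$, set $D_0$ to be an arbitrary ML-random set and, using pairwise disjointness, fix those controlling bits; then $\Phi^D$ restricted to the (computable) set of such $n_i$ is a computable partial function, so one of its value-$0$ and value-$1$ fibers is an infinite computable subset of $\overline{\Phi^D}$ or of $\Phi^D$. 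If instead cofinitely many $n_i$ cannot be controlled by their non-column-$0$ bits, again let $D_0$ be ML-random; for each such $n_i$ and each assignment of its column-$0$ bits, the remaining bits of $Q(n_i)$ can be set so that $\Phi^X(n_i)=1$, so by disjointness we may fix the non-column-$0$ bits of $\bigcup_i Q(n_i)$ so that $\Phi^D(n_i)=1$ for all large $i$. Either way $D=\bigoplus_i D_i\in\Some(\MLR,\alpha)$ and $\Phi^D$ is not bi-immune.

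The main obstacle is that, in contrast to \Cref{btt}, not being bi-immune is not enough: $\Many(\MLR)$ contains non-bi-immune reals --- for example $\bigoplus_j R$ for a fixed ML-random $R$ --- so to conclude $\Phi^D\notin\Many(\MLR)$ we must force all but finitely many \emph{columns} of the output to be non-random. I would handle this by performing the dichotomy above inside each output column: restricting $\Phi$ to the positions $\langle j,k\rangle$ of output column $j$ gives, for each $j$, a $btt(c)$-reduction, and the selected $n_i$ lying in column $j$ have pairwise disjoint (indeed globally disjoint) query sets, so one can try to sabotage column $j$ using the same case split. The crux --- and the step I expect to be most delicate --- is to assemble a single $D\in\Some(\MLR,\alpha)$ (column $0$ ML-random, the other columns built in stages) that sabotages cofinitely many output columns at once: one must check that columns for which the greedy search inside that column halts are caught by a nested application of the induction hypothesis, that the demands different output columns place on the free input bits are consistent (this is where global pairwise disjointness of the $Q(n_i)$ and finiteness of the hard-coded sets enter), and that at most finitely many output columns escape sabotage.
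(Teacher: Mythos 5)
The portions of your argument that you actually carry out coincide with the paper's proof: the hard-coding step when the greedy search halts, and, when it yields an infinite pairwise disjoint family $\{Q(n_i)\}$, the dichotomy between some input column controlling infinitely many of the computations $\Phi^X(n_i)$ and every input column failing to control cofinitely many of them. This produces an $X\in\Some(\MLR,\alpha)$ for which $\Phi^X$ is not bi-immune, hence $\Phi^X\notin\MLR$ --- and the paper's proof stops at exactly that point too. So what both arguments actually establish is $\MLR\not\le_{s,btt}\Some(\MLR,\alpha)$.

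Your worry that non-bi-immunity does not exclude membership in $\Many(\MLR)$ is well founded, but the repair you sketch --- assembling one oracle that sabotages cofinitely many output columns --- cannot be completed, because the statement as literally written is false. Given $X=\bigoplus_i A_i\in\Some(\MLR,\alpha)$, fix a computable $h$ from $\omega$ onto the index set of the columns with every fiber infinite, and put $\langle j,n\rangle\in\Phi^X\iff\langle h(j),n\rangle\in X$. This is an $m$-reduction; if $A_{i_0}\in\MLR$, then for each of the infinitely many $j$ with $h(j)=i_0$ the $j$-th column of $\Phi^X$ \emph{is} $A_{i_0}$, so $\Phi^X\in\Many(\MLR)$. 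Hence $\Many(\MLR)\le_{s,m}\Some(\MLR,\alpha)$ for every $\alpha$ with $2\le\alpha\le\omega$, contradicting the stated theorem even for $m$-reductions. The statement should be read with $\MLR$ in place of $\Many(\MLR)$ on the left (the companion claim $\MLR\not\le_{s,btt}\Many(\MLR)$ then follows from the $\alpha=\omega$ case via \Cref{thm:reviewer-2}); for that reading your completed construction, like the paper's, suffices, and the ``most delicate'' step you defer at the end is not delicate but impossible.
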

	\begin{proof}
		In the induction step, the case where the greedy algorithm fails is unchanged. Instead, consider the case where the algorithm enumerates a sequence of pairwise disjoint $Q(n_i)$.
		If there is a column $A_j$ such that there are infinitely many $n_i$ such that some $C(n_i)$ containing only bits from $A_j$ controls $\Phi^X(n)$, then we proceed as in \Cref{btt}:
		start with some other $A_k\in\MLR$ while the remaining columns are empty.
		We can then set the bits in each $Q(n_i)$ to control $\Phi^X(n_i)$ to guarantee that $\Phi^X$ is not bi-immune.
		This only changes bits in $A_j$, not $A_k$, so the final $A\in\Some(\MLR,\alpha)$.

		This leaves the case where for each $A_j$, cofinitely many of the $n_i$ cannot be controlled by their bits in $A_j$.
		Here put $A_0\in\MLR$ and assign bits to the other columns as in \Cref{btt}.\qed
	\end{proof}

\section{On the Medvedev $m$-reducibility of MLR to (a.e.-)KLR} %{October 19--22}

	Let $\mu$ denote the Lebesgue fair-coin measure on $2^\omega$. It enjoys the familiar probabilistic properties such as
	\Cref{verywell}:
	\begin{lemma}\label{verywell}
		Let $\C,\D\subseteq 2^\omega$.
		If $\mu(\C)=1$ and $\mu(\D)=1$ then $\mu(\C\cap\D)=1$.
	\end{lemma}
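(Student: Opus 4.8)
The plan is to pass to complements and invoke subadditivity of Lebesgue measure. Write $N_{\C} = 2^\omega\setminus\C$ and $N_{\D} = 2^\omega\setminus\D$; the hypotheses $\mu(\C)=1$ and $\mu(\D)=1$ say precisely that $N_{\C}$ and $N_{\D}$ are null, i.e.\ $\mu(N_{\C}) = \mu(N_{\D}) = 0$. By De Morgan, $\C\cap\D = 2^\omega\setminus(N_{\C}\cup N_{\D})$, so it suffices to show that $N_{\C}\cup N_{\D}$ is null.

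For this I would use finite subadditivity of $\mu$, namely $\mu(N_{\C}\cup N_{\D})\le \mu(N_{\C}) + \mu(N_{\D}) = 0 + 0 = 0$. Hence $\mu(\C\cap\D) = 1 - \mu(N_{\C}\cup N_{\D}) = 1$, as desired. This is the entire content of the argument; there are no calculations to grind through.

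The only point requiring a word of care, and the closest thing to an obstacle, is that $\C$ and $\D$ are not assumed measurable, so ``$\mu(\C) = 1$'' must be read as ``$2^\omega\setminus\C$ is contained in a Borel null set'' (equivalently, $\C$ has inner measure $1$). Under this reading the argument goes through verbatim: the union of two sets, each contained in a Borel null set, is contained in a Borel null set, since Borel null sets are closed under finite (indeed countable) unions; its complement is then a Borel set of measure $1$ contained in $\C\cap\D$, so $\C\cap\D$ has full measure. Thus the lemma is entirely routine and is recorded only because it is invoked repeatedly in what follows.
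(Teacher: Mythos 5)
Your proof is correct; the paper itself states this lemma without proof, treating it as a ``familiar probabilistic property,'' and your argument (pass to complements, apply finite subadditivity to the two null sets) is exactly the standard one being implicitly invoked. Your remark about reading $\mu(\C)=1$ via inner measure when $\C$ is not assumed measurable is a reasonable extra precaution and does not change the argument.
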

	%\begin{proof}
	%	\begin{eqnarray*}
	%		\mu(\C\cap\D)=1-\mu\left(\overline{\C\cap\D}\right)&=&1-\mu\left(\overline{C} \cup \overline{D}\right)\\
	%		&\ge& 1-\left(\mu\left(\overline{C}\right)+\mu\left(\overline{D}\right)\right)
	%		=1-(0+0)=1.\ \qed
	%	\end{eqnarray*}
	%\end{proof}

	Given a randomness notion $\C$, we say that a set $A$ is \emph{almost everywhere $\C$-random}\footnote{This notion was previously defined for the class of computable randoms in \cite{AECompRand}.} if $\mu\{B\mid A\in\C^B\}=1$. The following is an easy corollary of van Lambalgen's theorem:

	\begin{theorem}
		$\aeMLR=\MLR$.
	\end{theorem}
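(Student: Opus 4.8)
The plan is to derive both inclusions directly from van Lambalgen's theorem, which states that $A\oplus B\in\MLR$ if and only if $A\in\MLR$ and $B$ is ML-random relative to $A$ ($B\in\MLR^A$), equivalently if and only if $B\in\MLR$ and $A\in\MLR^B$. Everything else will be routine bookkeeping with null sets.

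For the inclusion $\aeMLR\subseteq\MLR$ I would observe that no genericity is needed: if $A\in\aeMLR$ then $\mu\{B\mid A\in\MLR^B\}=1$, so this set is in particular nonempty, and we may fix any $B$ in it. Since every (unrelativized) Martin-L\"of test is also a Martin-L\"of test relative to $B$, we have $\MLR^B\subseteq\MLR$, and hence $A\in\MLR$.

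For the reverse inclusion $\MLR\subseteq\aeMLR$, suppose $A\in\MLR$. Relative to the fixed oracle $A$, the class $\MLR^A$ is conull (its complement is the union of the components of a universal $A$-test, a $\mu$-null set), so for $\mu$-almost every $B$ we have $B\in\MLR^A$. For any such $B$, van Lambalgen's theorem applied to $A\in\MLR$ and $B\in\MLR^A$ yields $A\oplus B\in\MLR$, and applying the symmetric form of van Lambalgen's theorem to $A\oplus B\in\MLR$ gives $A\in\MLR^B$. Hence $\{B\mid A\in\MLR^B\}$ contains the conull set $\{B\mid B\in\MLR^A\}$, so it has measure $1$, i.e., $A\in\aeMLR$.

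The argument becomes essentially immediate once van Lambalgen's theorem is invoked; the only points requiring a moment's care are getting the direction of the equivalence right (we pass from randomness of $B$ over $A$ to randomness of $A$ over $B$ by routing through the join $A\oplus B$) and noting that the set $\{B\mid A\in\MLR^B\}$ is Borel, being arithmetically definable from $A$ and $B$, so that ``measure $1$'' is meaningful in the definition of $\aeMLR$. Neither is a genuine obstacle, so I do not expect any substantial difficulty in this proof.
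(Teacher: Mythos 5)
Your proof is correct and follows essentially the same route as the paper: the easy inclusion uses that relativized randomness implies unrelativized randomness, and the reverse inclusion uses van Lambalgen's theorem to convert the conull set $\{B\mid B\in\MLR^A\}$ into a subset of $\{B\mid A\in\MLR^B\}$. Your version merely makes explicit the intermediate step $A\oplus B\in\MLR$ that the paper's one-line invocation of van Lambalgen leaves implicit.
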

	\begin{proof}  
	    The $\subseteq$ direction is immediate -- if $A$ is random relative to some oracle, it is random relative to having no oracle, so $A\in\MLR$.
		For the reverse, let $A\in\MLR$. If $B\in\MLR^A$, then $A\in\MLR^B$ by van Lambalgen's theorem. Thus $\mu\{B\mid A\in\MLR^B\}\geq\mu(\MLR^A) = 1$, so that $A\in\aeMLR$.
	\end{proof}

	The corresponding theorem for $\aeKLR$ and $\KLR$ is an open question, and in fact whether $\KLR$ satisfies a version of van Lambalgen's theorem is also open \cite{VanLam}. The situation can be summarized as follows: $$\aeMLR=\MLR\subseteq \aeKLR\subseteq\KLR.$$
	Here we investigate possible connections between $\MLR$ and $\aeKLR$.

	Write $f:A\to B$ to indicate that $f$ is a total function from $A$ to $B$.
	No confusion is likely if, in addition to the Lebesgue measure on $2^\omega$, $\mu$ also denotes the least number operator as follows:
	for an arithmetic predicate $R(k)$, $\mu k(R(k))$ is the least $k$ such that $R(k)$ is true.

	Let $f:\omega\to\omega$ with range $f[w]$, and define $f^{\mathrm{inv}}:f[\omega]\to\omega$ by $f^{\mathrm{inv}}(n)=\mu m(f(m)=n)$.
	Let $g:\omega\to\omega$ be defined by $g(0)=f(0)$, and
	\[
	g(n+1)=f(\mu k(f(k)>g(n))).
	\]
	If $f$ is unbounded then $g:\omega\to\omega$ is total. If in addition $f$ is $\Delta^0_1$, then so is $g$.
	In fact, if $f$ is unbounded and $\Delta^0_1$ then $g[\omega]$ is infinite and $\Delta^0_1$.

	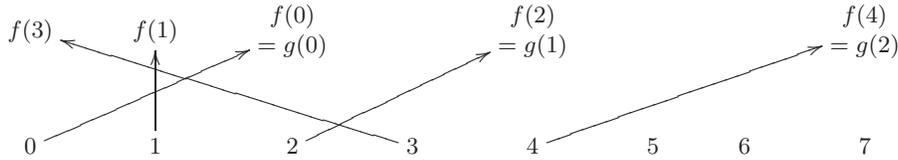
\begin{figure}
		\centering
		\[
			\xymatrix{
			f(3)&f(1) &\makecell{f(0)\\ =g(0)} & &\makecell{f(2)\\=g(1)} & & &\makecell{f(4)\\=g(2)} & &\\
			0\ar[urr] & 1\ar[u] & 2\ar[urr] & 3\ar[ulll] & 4\ar[urrr] &5&6&7
			}
		\]
		\caption{An example of the behaviors of $f$ and $g$ in \Cref{bjoernOct22}.}
		\label{fig:my_label}
	\end{figure}
	\begin{lemma}\label{bjoernOct22}
		Let $A\in 2^{\omega}$ and let $f:\omega\to\omega$ be an unbounded $\Delta^0_1$ function.
		Define $g$ by
		\[
		g(n+1)=f(\mu k(f(k)>g(n))).
		\]
		Then we have the implication $A\circ f\in\MLR\implies A\circ g\in\MLR$.
	\end{lemma}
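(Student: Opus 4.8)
The plan is to recognize $A\circ g$ as $A\circ f$ precomposed with a computable, strictly increasing map on the index set --- equivalently, as the subsequence of $A\circ f$ sampled along a fixed computable set of positions --- and then to invoke the standard fact that such sampling preserves ML-randomness.

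First I would make explicit the index function hidden in the recursion for $g$: put $h(0)=0$ and $h(n+1)=\mu k\bigl(f(k)>g(n)\bigr)$, so that $g=f\circ h$ by construction. A simultaneous induction on $n$ shows that $g(n)=f(h(n))$, that $f(k)\le g(n)$ for every $k\le h(n)$, and --- since in particular $k=h(n)$ fails the strict inequality $f(k)>g(n)$ --- that $h(n{+}1)>h(n)$ and $g(n{+}1)>g(n)$. Because $f$ is unbounded the search defining $h(n{+}1)$ always halts, so $h$ is total; because $f$ is $\Delta^0_1$, so is $h$. Being strictly increasing and $\Delta^0_1$, $h$ is exactly the increasing enumeration of the infinite computable set $R:=h[\omega]$. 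Since $(A\circ g)(n)=A\bigl(f(h(n))\bigr)=(A\circ f)(h(n))$ for all $n$, the real $A\circ g$ is precisely what one gets by listing the bits of $A\circ f$ at the positions in $R$, in order.

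It then suffices to establish the general lemma: if $X\in\MLR$ and $R\subseteq\omega$ is infinite and computable, then the real $Y_X$ obtained by reading the bits of $X$ at the positions in $R$, in order, is ML-random. I would prove this with tests. The sampling map $Z\mapsto Y_Z$ is computable, and it is measure-preserving: a basic cylinder $[\tau]$ in the target pulls back to the clopen set fixing $Z$ on the first $|\tau|$ elements of $R$ (which $R$ computable lets us locate), a set of measure $2^{-|\tau|}=\mu([\tau])$. Hence, given a Martin-L\"of test $\{U_n\}_n$ with $Y_X\in\bigcap_n U_n$, the sets $V_n:=\{Z : Y_Z\in U_n\}$ form a Martin-L\"of test with $\mu(V_n)=\mu(U_n)\le 2^{-n}$, and $X\in\bigcap_n V_n$ --- contradicting $X\in\MLR$. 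Applying this with $X=A\circ f$ and $Y_X=A\circ g$ finishes the proof. (Alternatively one may conclude from the restated Kolmogorov-complexity characterization of $\MLR$: a shortest program for $(A\circ g)\upto n$ already determines $n$, hence $h(n{-}1)$ and which positions below $h(n{-}1){+}1$ are sampled, so appending the remaining non-sampled bits recovers $(A\circ f)\upto(h(n{-}1){+}1)$; this gives $K\bigl((A\circ f)\upto(h(n{-}1){+}1)\bigr)\le K\bigl((A\circ g)\upto n\bigr)+\bigl(h(n{-}1){+}1-n\bigr)+O(1)$, and randomness of $A\circ f$ forces $K\bigl((A\circ g)\upto n\bigr)\ge n-c$ for a fixed $c$.)

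The one step demanding care is the first: one must verify that $h$ is total, \emph{strictly} increasing, and $\Delta^0_1$, because everything downstream --- that $R$ is computable, that $h$ enumerates it monotonically, and that $A\circ g$ is an honest computable sub-selection from $A\circ f$ (not a permuted or repeated reading) --- depends on these three facts. The sampling lemma itself is routine.
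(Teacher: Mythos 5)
Your proof is correct and takes essentially the same route as the paper: your index map $h$ is exactly the paper's $f^{\mathrm{inv}}\circ g$ (total, strictly increasing, computable), and your pullback of the Martin-L\"of test along the measure-preserving sampling map is precisely the paper's construction of the test $V_n=\{B\mid B\circ f^{\mathrm{inv}}\circ g\in U_n\}$. The extra care you take in verifying that $h$ is strictly increasing and computable is exactly the point the paper leaves implicit.
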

	\begin{proof}
		Suppose that $A\circ g\not\in\MLR$. Then $A$ is Martin-L\"of null, i.e.,
		there is some uniformly $\Sigma^0_1$ class $\{U_n\}_{n\in\omega}$ with $\mu(U_n)\le 2^{-n}$ such that $A\circ g\in\bigcap_{n\in\omega}U_n$.
		Note that $f^{\mathrm{inv}}\circ g:\omega\to\omega$ is total and strictly increasing.
		Also, $f\circ f^{\mathrm{inv}}$ is the identity function on $f[\omega]\supseteq g[\omega]$.

		Thus
		\[
			A\circ g = A\circ f\circ f^{\mathrm{inv}}\circ g \]
		and we have
		\[
			A\circ f \in V_n :=\{B\mid B\circ f^{\mathrm{inv}}\circ g\in U_n\}
		\]
		The sets $V_n$ are also $\Sigma^0_1$ uniformly in $n$, and $\mu( V_n)\le 2^{-n}$. Thus $A\circ f\not\in\MLR$.
	\end{proof}

	\begin{lemma}\label{bjoernOct22two}
		If $\C$ is a class of reals and $\MLR\le_{s,m}\C$, then there is a 1-reduction
		$A\mapsto \Psi^A=A\circ g$, of $\MLR$ to $\C$, such that $g$ is strictly increasing and has computable range.
	\end{lemma}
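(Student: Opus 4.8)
The plan is to unwind the definition of $\le_{s,m}$ into a concrete $m$-reduction, observe that any $m$-reduction acts by precomposition with a total computable function $h:\omega\to\omega$, and then feed $h$ into \Cref{bjoernOct22} to smooth it into a strictly increasing function with computable range.

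First I would spell out what $\MLR\le_{s,m}\C$ gives us: a total computable $h:\omega\to\omega$ such that for every $A\in\C$, the set $\Phi^A$ defined by $n\in\Phi^A\iff h(n)\in A$ lies in $\MLR$; equivalently, $A\circ h\in\MLR$ for all $A\in\C$. (If $\C=\emptyset$ both hypothesis and conclusion are vacuous and we may take $g$ to be the identity, so assume $\C\neq\emptyset$.) Next I would show $h$ must be unbounded: if $h[\omega]\subseteq\{0,\dots,N\}$ then $A\circ h$ depends only on the finite string $A\upto(N+1)$, hence is computable, and so is not ML-random — contradicting $A\circ h\in\MLR$ for any $A\in\C$.

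Since $h$ is now an unbounded $\Delta^0_1$ function, I would define $g$ from $h$ exactly as in \Cref{bjoernOct22}, namely $g(0)=h(0)$ and $g(n+1)=h(\mu k(h(k)>g(n)))$. As recorded in the discussion preceding \Cref{bjoernOct22}, $g$ is total; it is strictly increasing because the witnessing $k$ always satisfies $h(k)>g(n)$; and $g[\omega]$ is infinite and $\Delta^0_1$, with membership in $g[\omega]$ decidable since $g$ is increasing and computable. Applying \Cref{bjoernOct22} (with $h$ in the role of $f$) gives, for every $A\in 2^{\omega}$, the implication $A\circ h\in\MLR\implies A\circ g\in\MLR$. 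In particular, for every $A\in\C$ we have $A\circ h\in\MLR$, hence $A\circ g\in\MLR$.

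Finally I would verify that $\Psi^A:=A\circ g$ is a $1$-reduction: $n\in\Psi^A\iff g(n)\in A$, with $g$ computable and injective (being strictly increasing). Thus $A\mapsto\Psi^A=A\circ g$ is a $1$-reduction of $\MLR$ to $\C$ with $g$ strictly increasing and of computable range, as required. I do not expect a genuine obstacle here: all of the real work is already isolated in \Cref{bjoernOct22}, and the only points needing care are the unboundedness of $h$ (which is exactly what licenses the lemma) and the degenerate empty-class case.
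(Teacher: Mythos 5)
Your proposal is correct and follows essentially the same route as the paper: extract the computable $f$ (your $h$) from the $m$-reduction, argue it is unbounded because no ML-random real is computable, and then invoke \Cref{bjoernOct22} together with the properties of $g$ established just before it. The only differences are cosmetic — you spell out the unboundedness argument and the empty-class degenerate case a bit more explicitly than the paper does.
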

	\begin{proof}
		Suppose $\Phi^A(n)=A\circ f(n)$ for all $A$ and $n$, where
		$A\in\C\implies\Phi^A\in\MLR$ for all $A$ and $f:\omega\to\omega$ is $\Delta^0_1$.
		Since there is no computable element of $\MLR$, it follows that $f$ is unbounded.

		Let $\Psi^A(n)=A\circ g(n)$ with $g$ as in \Cref{bjoernOct22}. Then we have
		\[
			A\in\C\implies\Phi^A\in\MLR\implies\Psi^A\in\MLR,
		\]
		as desired.
	\end{proof}

	\begin{theorem}\label{bjoernOct22three}
	If $\MLR \le_{s,m} \KLR$
	then
	$\aeKLR = \MLR$.
	\end{theorem}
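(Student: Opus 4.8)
The plan is to prove the nontrivial inclusion $\aeKLR\subseteq\MLR$; the reverse $\MLR\subseteq\aeKLR$ has already been noted. Apply \Cref{bjoernOct22two} with $\C=\KLR$ to fix a strictly increasing computable $g$ with computable range $R:=g[\omega]$ such that $X\in\KLR$ implies $X\circ g\in\MLR$, and suppose toward a contradiction that some $A\in\aeKLR$ lies outside $\MLR$; in particular $A\in\KLR$. For $C\in 2^\omega$ let $D_C$ be the real agreeing with $A$ along $R$ (so $D_C(g(k))=A(k)$) and with $C$ along $\overline R:=\omega\setminus R$; then $D_C\circ g=A$. If $\overline R$ is finite, then $D_\emptyset$ is obtained from $A$ by inserting finitely many bits at a computable set of positions, so $D_\emptyset\in\KLR$ because $A\in\KLR$, and the reduction forces the contradiction $A=D_\emptyset\circ g\in\MLR$. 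So $\overline R$ is infinite; then $R$ and $\overline R$ are infinite computable sets, a computable permutation of bit positions identifies $D_C$ with $A\oplus C$, and hence $D_C\in\KLR$ iff $A\oplus C\in\KLR$. Were $A\oplus C\in\KLR$ for some $C$, the reduction would again give $A=D_C\circ g\in\MLR$; therefore $A\oplus C\notin\KLR$ for every $C$.

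The heart of the proof is a provable instance of van Lambalgen-style reasoning for $\KLR$: if $A\oplus C\notin\KLR$ and $C\in\KLR^A$, then $A\notin\KLR^C$. Fix such a $C$ and a partial computable nonmonotonic betting strategy $S$ that succeeds on $A\oplus C$, i.e.\ has unbounded capital there, and split the run of $S$ on $A\oplus C$ according to the parity of the position queried. The \emph{odd part} of $S$ is the strategy that, with oracle $A$, internally simulates $S$ (reading the bits at even positions off $A$) and actually plays against $C$ precisely the bets $S$ makes at odd positions, each time wagering the same fraction of its current capital that $S$ wagers. This is a legitimate partial $A$-computable betting strategy that never re-queries a position of $C$, and its capital after any stage is the product of the capital multipliers that $S$ uses at its odd moves up to that stage. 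Since $C\in\KLR^A$, this strategy does not succeed, so that product is bounded, say by $M$. At every finite stage the capital of $S$ equals the product of its odd-move multipliers and its even-move multipliers, hence is at most $M$ times the latter; as $S$ does succeed, the product of its even-move multipliers is unbounded. But that product is exactly the capital of the \emph{even part} of $S$, the partial $C$-computable strategy that plays $S$'s even-position bets against $A$; so the even part succeeds on $A$, witnessing $A\notin\KLR^C$.

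It follows that $A\notin\KLR^C$ for every $C$ in the measure-one class $\KLR^A$, so $\mu\{C : A\in\KLR^C\}=0$, contradicting $A\in\aeKLR$; hence $\aeKLR\subseteq\MLR$. The step I expect to demand the most care is the splitting argument: one must confirm that the odd and even parts really are (partial) oracle-computable betting strategies that never revisit a position, that ``does not succeed'' for the odd part is exactly the assertion negated by $C\in\KLR^A$, and that the capital of $S$ genuinely factors as the product of the two coordinate capitals — which is what lets the boundedness supplied by $C\in\KLR^A$ be transferred to the other coordinate. No appeal to the open converse direction of van Lambalgen for $\KLR$ is required.
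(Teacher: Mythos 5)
Your proof is correct and takes essentially the same route as the paper's: both use \Cref{bjoernOct22two} to put the reduction in the form $A\mapsto A\circ g$, place $A$ on the range of $g$ and an oracle $C$ on its complement, and combine the measure-one fact $\mu(\KLR^A)\ge\mu(\MLR^A)=1$ with the easy direction of van Lambalgen for $\KLR$ (you use its contrapositive and run the whole argument by contradiction, but the logical content is identical). The one difference is that where the paper simply cites this last ingredient as Proposition 11 of \cite{MR2183813}, you re-prove it by factoring the capital of a nonmonotonic betting strategy into its even-position and odd-position multipliers; that argument is sound and is essentially the published proof, needing only the standard (relativizable) fact that partial and total computable nonmonotonic strategies determine the same randomness class.
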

	\begin{proof}
		Assume that $\MLR \le_{s,m} \KLR$. By \Cref{bjoernOct22two}, we have in fact
		a 1-reduction $\Phi$ given by $\Phi^A(n)=A(f(n))$ for an injective and strictly increasing computable $f$ with computable range $Z$.

		Let $A\in\aeKLR$.
		Consider the following classes of reals:
		\begin{eqnarray*}
		\C&=&\{B\mid A\in\KLR^B\}\\
		\D&=&\{B\mid B\in\KLR^A\}\\
		\mathcal E&=&\{B\mid B\in\MLR^A\}
		\end{eqnarray*}
		Since $A\in\aeKLR$, $\mu(\C)=1$.
		It is well-known that $\mu(\mathcal E)=1$.
		Since $\D\supseteq \mathcal E$, it follows that $\mu(\D)=1$.
		By \Cref{verywell}, $\mu(\C\cap\D)=1$; in particular, $\C\cap\D\ne\emptyset$.
		Let $B\in\C\cap\D$. Thus $A\in\KLR^B$ and $B\in\KLR^A$.
		By \cite[Proposition 11]{MR2183813}, $A\oplus_Z B\in\KLR$,
		where $A\oplus_Z B$ is the unique real whose restrictions to $\overline{Z}$ and $Z$ are $A$ and $B$, respectively.
		By definition of $Z$, we have $\Phi^{U\oplus_Z V}=U$ for all $U$ and $V$.
		Since $\Phi$ is a reduction of $\MLR$ to $\KLR$, it follows that $A=\Phi^{A\oplus_Z B}\in\MLR$, as desired.
	\end{proof}

	\begin{theorem}\label{bjoernOct22four}
		Let $Z$ be an infinite computable set and let $A,B,X$ be sets.
		If $A\in\KLR^{B\oplus X}$ and $B\in\KLR^{A\oplus X}$ then $A\oplus_Z B\in\KLR^X$.
	\end{theorem}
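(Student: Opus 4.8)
This is precisely the relativization to an arbitrary oracle $X$ of \cite[Proposition 11]{MR2183813} (the case $X=\emptyset$), so the plan is to re-run the proof of that proposition with $X$ carried along as an extra oracle throughout; I sketch the argument to indicate that no step obstructs the relativization. Write $C=A\oplus_Z B$, and call the positions in $\overline Z$ (which carry the bits of $A$) the \emph{$A$-positions} of $C$, and those in $Z$ (carrying the bits of $B$) the \emph{$B$-positions}. Suppose toward a contradiction that $C\notin\KLR^X$. Then there is an $X$-computable nonmonotonic betting strategy $b$ that succeeds on $C$, so the capital of $b$ along $C$ has $\limsup=\infty$; in particular $b$ never hangs along $C$ and places infinitely many bets.

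From $b$ I extract two sub-strategies. Define $b_A$, a nonmonotonic betting strategy on $A$ using $X\oplus B$ as oracle, by simulating $b$: whenever the simulation of $b$ wants to query or bet a $B$-position it reads that bit off the oracle $B$ for free and feeds it to the simulation without betting anything, and whenever the simulation bets on an $A$-position $p$ it has $b_A$ place the corresponding bet on $A(p)$. Since $b$ never re-queries a position of $C$, $b_A$ never re-queries a position of $A$; and at each step $b_A$ has $X$ together with every bit $b$ has seen so far (the $B$-bits from the oracle, the $A$-bits from positions it has itself scanned), so the simulation is faithful and $b_A$ is computed uniformly from $X\oplus B$. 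Symmetrically, $b_B$ is an $(X\oplus A)$-computable nonmonotonic betting strategy on $B$. The capital of $b$ along $C$ is the running product of the per-bet multipliers of $b$, and sorting these into the multipliers coming from $A$-bets and from $B$-bets shows that at every stage $n$,
\[
\mathrm{capital}_b(n)\;=\;\mathrm{capital}_{b_A}(a(n))\cdot\mathrm{capital}_{b_B}(z(n)),
\]
where $a(n)$ and $z(n)$ count the $A$-bets and $B$-bets that $b$ has made by stage $n$.

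Since $\sup_n\mathrm{capital}_b(n)=\infty$, at least one of $b_A$, $b_B$ has unbounded capital along its side: otherwise both capitals stay below some $M$ and the displayed identity bounds $\mathrm{capital}_b$ by $M^2$. Say $b_A$ has unbounded capital. Then $b_A$ places infinitely many bets, since a strategy making only finitely many bets has eventually constant, hence bounded, capital; so $b_A$ is an $(X\oplus B)$-computable nonmonotonic betting strategy that succeeds on $A$, contradicting $A\in\KLR^{B\oplus X}=\KLR^{X\oplus B}$. If instead $b_B$ has unbounded capital we contradict $B\in\KLR^{A\oplus X}$ in the same way, completing the proof.

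The only genuine bookkeeping — and the only place where relativization could in principle fail — is the extraction of $b_A$ and $b_B$: one must check that $b_A$ is a legitimate, possibly partial, nonmonotonic betting strategy computed uniformly from $X\oplus B$, that the simulation of $b$ never asks $b_A$ for a bit it cannot supply, that the no-re-query constraint is preserved, and that when $b$ makes only finitely many bets on one of the two sides — so that the corresponding sub-strategy legitimately hangs, having scanned only finitely many positions of that side — nothing goes wrong. All of this is the unrelativized argument verbatim with $X$-computability in place of plain computability, and the step deducing an unbounded factor from an unbounded product carries no computability content at all.
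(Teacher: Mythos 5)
Your proposal is correct and takes essentially the same approach as the paper, whose entire proof is the single line ``Relativization of \cite[Proposition 11]{MR2183813}''; you simply spell out the sub-strategy extraction and the multiplicative capital decomposition to confirm that the relativization goes through, which is more detail than the paper gives but not a different argument.
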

	\begin{proof}
		Relativization of \cite[Proposition 11]{MR2183813}.
	\end{proof}

	We can relativize to obtain the following conditional result, a strengthening of \Cref{bjoernOct22three}:
	\begin{theorem}
		If $\MLR\le_{s,m}\aeKLR$ then $\MLR=\aeKLR$.
	\end{theorem}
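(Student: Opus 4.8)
The plan is to relativize the proof of \Cref{bjoernOct22three}, replacing its single appeal to \cite[Proposition 11]{MR2183813} by \Cref{bjoernOct22four} and inserting one layer of Fubini. Since $\MLR=\aeMLR\subseteq\aeKLR$ is already established, it suffices to prove $\aeKLR\subseteq\MLR$. First I would apply \Cref{bjoernOct22two} with the class $\aeKLR$ in place of $\C$: assuming $\MLR\le_{s,m}\aeKLR$, we obtain a $1$-reduction $\Phi$ with $\Phi^A(n)=A(f(n))$ for an injective, strictly increasing, computable $f$ whose range $Z$ is computable, and, exactly as in the proof of \Cref{bjoernOct22three}, $\Phi^{U\oplus_Z V}=U$ for all $U,V$.

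Fix $A\in\aeKLR$; we must show $A\in\MLR$. Pass to the product space $2^\omega\times 2^\omega$ with product measure $\mu\times\mu$ and consider
\[
\C=\{(B,X)\mid A\in\KLR^{B\oplus X}\},\qquad \D=\{(B,X)\mid B\in\KLR^{A\oplus X}\}.
\]
The map $(B,X)\mapsto B\oplus X$ pushes $\mu\times\mu$ forward to $\mu$, so $(\mu\times\mu)(\C)=\mu\{Y\mid A\in\KLR^Y\}=1$ because $A\in\aeKLR$. For $\D$, each fixed $X$-section has measure $1$ since $\MLR^{A\oplus X}\subseteq\KLR^{A\oplus X}$ and $\mu(\MLR^{A\oplus X})=1$; hence $(\mu\times\mu)(\D)=1$ by Fubini. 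The intersection of two measure-one sets has measure one (the product-space version of \Cref{verywell}), so $(\mu\times\mu)(\C\cap\D)=1$, and a further application of Fubini shows that $\mu\{X\mid (B,X)\in\C\cap\D\}=1$ for $\mu$-almost every $B$; fix one such $B_0$.

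For every $X$ in this measure-one set we have $A\in\KLR^{B_0\oplus X}$ and $B_0\in\KLR^{A\oplus X}$, so \Cref{bjoernOct22four} gives $A\oplus_Z B_0\in\KLR^X$. Therefore $\mu\{X\mid A\oplus_Z B_0\in\KLR^X\}=1$, i.e. $A\oplus_Z B_0\in\aeKLR$. Since $\Phi$ reduces $\MLR$ to $\aeKLR$ and $\Phi^{A\oplus_Z B_0}=A$, we conclude $A\in\MLR$, completing the proof that $\aeKLR\subseteq\MLR$.

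The measure isomorphism for $\oplus$ and the two uses of Fubini are routine; the one point that requires care is extracting a \emph{single} $B_0$ that works for $\mu$-almost every $X$, rather than merely a single good pair $(B,X)$ --- it is this uniformity in $X$ that is needed to feed \Cref{bjoernOct22four} and land in $\aeKLR$. Beyond this bookkeeping I anticipate no real obstacle, since the substantive content is already isolated in \Cref{bjoernOct22two,bjoernOct22four}.
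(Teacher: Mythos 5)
Your proposal is correct and follows essentially the same route as the paper: relativize the argument of \Cref{bjoernOct22three}, use the product measure and Fubini to extract a single $B$ working for almost every $X$, feed \Cref{bjoernOct22four} to get $A\oplus_Z B\in\aeKLR$, and apply the reduction. Your write-up is in fact slightly more careful than the paper's (spelling out the pushforward of $\mu\times\mu$ under $(B,X)\mapsto B\oplus X$ and the extraction of a single $B_0$), but the substance is identical.
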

	\begin{proof}
		Assume $\MLR\le_{s,m}\aeKLR$ as witnessed by a reduction $\Phi$, and let $A\in\aeKLR$. Let $Z$ be as in the proof of \Cref{bjoernOct22three}.

		By definition of $\aeKLR$, for almost all $(B,X)$, $A\in\KLR^{B\oplus X}$. Moreover, for almost all $(B,X)$, $B\in\MLR^{A\oplus X}\subseteq\KLR^{A\oplus X}$.

		Therefore
		\[
			\mu\{(B,X)\mid A\oplus_Z B\in\KLR^X\}=1
		\]
		by \Cref{bjoernOct22four}. By Fubini's Theorem this implies
		\[
			\mu\{ B\mid \mu\{X\mid A\oplus B\in\KLR^X\}=1\}=1.
		\]
		Since measure-one sets are nonempty, there exists a set $B$ such that
		\[
			\mu\{X\mid A\oplus B\in\KLR^X\}=1,
		\]
		i.e., $A\oplus B\in\aeKLR$.
		By assumption on $\Phi$, $A=\Phi^{A\oplus_Z B}\in\MLR$, as desired.
	\end{proof}

	We can also strengthen these results to work for $btt(1)$-reducibility instead of $m$-reducibility.

	If an analogue of van Lambalgen's theorem holds for KLR (as it does for MLR), then the two theorems have the same content, as KLR = a.e.-KLR.

\bibliographystyle{alpha}
\bibliography{cie2022}
\end{document}